\pgfplotsset{width=9cm,compat=1.9}
\pgfplotsset{holdot/.style={color=black,fill=white,only marks,mark=*}}
\setlist[enumerate,1]{label=(\roman*)}
\setlist[enumerate,2]{label=(\alph*)}
\setlist[enumerate,3]{label=(\Roman*)}
\setlist[enumerate,4]{label=(\Alph*)}
\theoremstyle{definition}
\newtheorem{defn}{Definition}[section]
\newtheorem{ex}[defn]{Example}
\newtheorem{rmk}[defn]{Remark}
\theoremstyle{plain}
\newtheorem{thm}[defn]{Theorem}
\newtheorem{lem}[defn]{Lemma}
\newtheorem{prop}[defn]{Proposition}
\def\C{\ensuremath{\mathbb{C}}}
\def\D{\ensuremath{\mathbb{D}}}
\def\P{\ensuremath{\mathbb{P}}}
\def\R{\ensuremath{\mathbb{R}}}
\def\Z{\ensuremath{\mathbb{Z}}}
\def\AA{\ensuremath{\mathcal A}}
\def\FF{\ensuremath{\mathcal F}}
\def\HH{\ensuremath{\mathcal H}}
\def\II{\ensuremath{\mathcal I}}
\def\OO{\ensuremath{\mathcal O}}
\def\PP{\ensuremath{\mathcal P}}
\def\TT{\ensuremath{\mathcal T}}
\def\Aut{\mathop{\mathrm{Aut}}\nolimits}
\def\c{\mathop{\mathrm{c}}\nolimits}
\def\ch{\mathop{\mathrm{ch}}\nolimits}
\def\Coh{\mathop{\mathrm{Coh}}\nolimits}
\def\Db{\mathrm{D}^{\mathrm{b}}}
\def\deg{\mathop{\mathrm{deg}}\nolimits}
\def\dim{\mathop{\mathrm{dim}}\nolimits}
\def\ext{\mathop{\mathrm{ext}}\nolimits}
\def\Ext{\mathop{\mathrm{Ext}}\nolimits}
\def\Gr{\mathop{\mathrm{Gr}}\nolimits}
\def\hom{\mathop{\mathrm{hom}}\nolimits}
\def\Hom{\mathop{\mathrm{Hom}}\nolimits}
\def\min{\mathop{\mathrm{min}}\nolimits}
\def\PGL{\mathop{\mathrm{PGL}}}
\def\Pic{\mathop{\mathrm{Pic}}\nolimits}
\def\rk{\mathop{\mathrm{rk}}}
\def\td{\mathop{\mathrm{td}}\nolimits}
\def\Stab{\mathop{\mathrm{Stab}}\nolimits}
\def\into{\ensuremath{\hookrightarrow}}
\def\onto{\ensuremath{\twoheadrightarrow}}
\title{Stability and Applications}
\author{Emanuele Macr\`i}
\address{Universit\'e Paris-Saclay, CNRS, Laboratoire de math\'ematiques d'Orsay, 91405 Orsay, France}
\email{emanuele.macri@universite-paris-saclay.fr}
\urladdr{https://www.math.u-psud.fr/~macri/}
\author{Benjamin Schmidt}
\address{Gottfried Wilhelm Leibniz Universit\"at Hannover, Institut f\"ur Algebraische Geometrie, Welfengarten 1, 30167 Hannover, Germany}
\email{bschmidt@math.uni-hannover.de}
\urladdr{https://sites.google.com/site/benjaminschmidtmath/}
\keywords{Brill--Noether Theorem, Derived categories, Space curves, Stability conditions}
\thanks{E.M.~was partially supported by the NSF FRG-grant DMS-1664215 and, during the writing of this paper, by the Institut des Hautes \'Etudes Scientifiques (IH\'ES) and by a Poste Rouge CNRS at Universit\'e Paris-Sud.
B.S.~was partially supported by an AMS-Simons Travel Grant.}
\subjclass[2010]{14J60 (Primary); 14D20, 14F05 (Secondary)}
\begin{document}

\begin{abstract}
We give a brief overview of Bridgeland's theory of stability conditions, focusing on applications to algebraic geometry. We sketch the basic ideas in Bayer's proof of the Brill--Noether Theorem and in the authors' proof of a theorem by Gruson--Peskine and Harris on the genus of space curves.

This note originated from the lecture of the first author at the conference \emph{From Algebraic Geometry to Vision and AI: A Symposium Celebrating the Mathematical Work of David Mumford}, held at the Center of Mathematical Sciences and Applications, Harvard University, August 18-20, 2018.
\end{abstract}

\maketitle

%%%%%%%%%%%%%%%%%%%%%%%%%%%%%%%%%%%%%%%%%%%%%
%%%%%%%%%%%%%%%%%%%%%%%%%%%%%%%%%%%%%%%%%%%%%
%%%%%%%%%%%%%%%%%%%%%%%%%%%%%%%%%%%%%%%%%%%%%

\section{Introduction}

The theory of Bridgeland stability conditions (\cite{Bri07:stability_conditions}) has seen important developments in the past few years. Emerging from the mathematical physics literature (\cite{Dou02:mirror_symmetry}), it now connects to different branches in mathematics including symplectic geometry (\cite{BS15:quadratic_differentials,Joy15:conjectures_fukaya,HKK17:flat_surfaces,Smi17:stability_conjectures}) and representation theory (\cite{KS08:wall_crossing,ABM15:stability,Bri17:scattering}). 
This note gives a quick introduction to the basic theory through applications to problems in algebraic geometry.

%The aim of these notes is to present a quick introduction to the basic theory for derived categories of smooth projective varieties, through applications to problems in algebraic geometry. \footnote{\color{red} It's not really an intro to derived categories on varieties. Maybe "These notes give a quick introduction to the basic theory through applications to problems in algebraic geometry."?} 

A stability condition in the derived category is a direct generalization of the notion of stability for vector bundles on curves (\cite{Mum63:quotients}). The key property, Bridgeland's deformation theorem, is that stability conditions can be varied, and their variations form a complex manifold. More recent results (\cite{AP06:constant_t_structures,Tod08:K3Moduli,PT19:bridgeland_moduli_properties,AHLH18:moduli}) show that moduli spaces of semistable objects exist as proper algebraic spaces. Moreover, when the stability condition varies, the moduli space of semistable objects changes in a controlled way, giving rise to a locally-finite wall and chamber structure. Stability conditions and moduli spaces also exist in the relative setting (\cite{BLMNPS19:family}).
Finally, the existence of stability conditions is now known in interesting examples, including surfaces (\cite{Bri08:stability_k3,AB13:k_trivial}), certain Calabi--Yau threefolds (\cite{MP15:conjecture_abelian_threefoldsI, BMS16:abelian_threefolds,Li18:bg3_quintic}), Fano threefolds (\cite{Mac14:conjecture_p3, Sch14:conjecture_quadric, Li19:conjecture_fano_threefold, Piy17:Fano, BMSZ17:stability_fano}), some product varieties (\cite{Kos18:stability_products, Liu19:products}), and varieties with nef tangent bundles (\cite{Kos18:nef_tangent}).
We review the definition and basic properties of Bridgeland stability conditions in Section~\ref{sec:stability}.

The key approach to apply stability conditions to problems in algebraic geometry via wall-crossing was originally suggested in \cite{AB11:Reider}. The starting point is a certain limit point in the space of stability conditions where semistable objects are known, for example the large volume limit point, where stability essentially agrees with Gieseker stability for sheaves. Then the goal is to study how semistable objects vary when stability conditions move towards some other limit point determined by the problem in question. In favorable situations, this study is indeed possible, and leads to non-trivial results.

In this note, we show how to apply this approach to give new proofs for two fundamental results in algebraic geometry. The first application is to the Brill--Noether theorem by following \cite{Bay18:BrillNoether}. In this case, we look at stability in the derived category of a K3 surface. At the large volume limit point we aim to understand pure sheaves supported on curves with rank one. The theorem will follow once we understand the first wall where semistable objects change, and the geometry of its destabilized locus. This will be the subject of Section~\ref{sec:BN}.

The second application is to give bounds to the genus of space curves by following \cite{MS18:space_curves}.
Again, the starting point is the large volume limit point for certain weak stability conditions in the derived category of 
projective space, but we now look at ideal sheaves of curves.
If a curve has a too large genus, then its ideal sheaf must be destabilized at a certain point. Similarly as in the case of the Brill--Noether theorem, the aimed result follows once we are able to give a precise bound on when this could happen. More details will be given in Section~\ref{sec:genus}.

We do not claim any completeness in this short note.
There are many surveys available on the subject, starting from \cite{Bri06:icm} to the more specific lecture notes \cite{Huy11:intro_stability,Bay11:lectures_notes_stability,Bay16:short_proof,MS17:lectures_notes,MS18:ncK3}.

%%%%%%%%%%%%%%%%%%%%%%%%%%%%%%%%%%%%%%%%%%%%%
%%%%%%%%%%%%%%%%%%%%%%%%%%%%%%%%%%%%%%%%%%%%%
%%%%%%%%%%%%%%%%%%%%%%%%%%%%%%%%%%%%%%%%%%%%%

\subsection*{Acknowledgements}

The authors would like to thank Enrico Arbarello, Arend Bayer, and Paolo Stellari for many useful conversations.
This paper is an expanded version of the lecture of the first author at the conference \emph{From Algebraic Geometry to Vision and AI: A Symposium Celebrating the Mathematical Work of David Mumford}, held at the Center of Mathematical Sciences and Applications, Harvard University, August 18-20, 2018. The authors would like to thank the organizers Ching-Li Chai, David Gu, Amnon Neeman, Mark Nitzberg, Yang Wang, Shing-Tung Yau, Song-Chun Zhu, and to dedicate this paper to David Mumford.

%%%%%%%%%%%%%%%%%%%%%%%%%%%%%%%%%%%%%%%%%%%%%
%%%%%%%%%%%%%%%%%%%%%%%%%%%%%%%%%%%%%%%%%%%%%
%%%%%%%%%%%%%%%%%%%%%%%%%%%%%%%%%%%%%%%%%%%%%

%\subsection*{Notation}
%For $X$ a smooth projective variety, we denote by $\Db X$ the bounded derived category of coherent sheaves on $X$ and, for a scheme $T$, by $\mathrm{D}_{T\text{-perf}}(X\times T)$ the category of $T$-perfect quasi-coherent sheaves on $X\times T$. For a complex $E\in \Db X$, the cohomology sheaf is denoted by $\HH^{i}(E)$ and its Chern character by $\ch(E)$.
%; we also sometimes use the truncated Chern character $\ch_{\leq l}(E)=(\ch_0(E), \ldots, \ch_l(E))$.

%%%%%%%%%%%%%%%%%%%%%%%%%%%%%%%%%%%%%%%%%%%%%
%%%%%%%%%%%%%%%%%%%%%%%%%%%%%%%%%%%%%%%%%%%%%
%%%%%%%%%%%%%%%%%%%%%%%%%%%%%%%%%%%%%%%%%%%%%

\section{Stability Conditions}\label{sec:stability}

In this section, after briefly recalling the various notions of stability for sheaves in Section \ref{subsec:sheaves}, we define stability for objects in a triangulated category and review some of its basic properties (in Section \ref{subsec:Bridgeland}).

\subsection{Stability for sheaves}\label{subsec:sheaves}

In order to construct separated moduli spaces of vector bundles on curves, Mumford introduced the notion of \emph{stability} in \cite{Mum63:quotients}. The definition is surprisingly simple: to quote directly from \cite[Page 529]{Mum63:quotients}, a vector bundle $E$ on a smooth projective curve $C$ is \emph{stable} if all its subbundles are ``less ample'', i.e., for all proper subbundles $F\subset E$, we have
\begin{equation}\label{eq:MumfordStability}
\mu(F):=\frac{\deg(F)}{\rk(F)} < \mu(E).
\end{equation}

The set of all stable vector bundles of fixed rank and degree is naturally isomorphic to the set of points of a smooth quasi-projective variety. There is a Quot-scheme and an action of $\PGL_n$ such that stable vector bundles correspond precisely to stable points in the sense of Geometric Invariant Theory (see \cite{MFK94:git,New78:introduction_moduli}).
%This numerical definition works since it corresponds to the vector bundle $E$ being stable, with respect to Geometric Invariant Theory, as a point of a certain variety (a Grassmannian or a Quot-scheme in this case) with respect to the action of a reductive group ($\mathrm{PGL}$ in this case). See \cite{MFK94:git,New78:introduction_moduli} for full details.

This notion of stability can also be reinterpreted in terms of unitary representations (over the complex numbers). The main result in \cite{NS65:stable_vector_bundles} states that a vector bundle over a curve is stable if and only if it comes from an irreducible projective unitary representation of the fundamental group of the curve.
Finally, by \cite{Don83:new_proof_Narasimhan_Seshadri} (based on and complementing \cite{AB82:Yang_Mills_curves}), stability can also be reinterpreted in terms of connections. A vector bundle $E$ over a curve is stable if and only if there is a unique unitary connection on $E$ having constant central curvature $-2\pi i \mu(E)$.
%\footnote{\color{red} What does it mean that unique is in brackets? Does it have to be unique or not? :)}

The theory of vector bundles on curves has applications in many areas of mathematics. For example, in algebraic geometry, there are applications to syzygies of curves and projective normality (see \cite{Mum70:quadratic_equations,Laz89:sampling_vector_bundles_techniques}).

\medskip

In higher dimension, there are several generalizations possible.
The first one (\cite{Tak72:Stability1}) is to directly extend the definition of stability for vector bundles on curves in \eqref{eq:MumfordStability} to torsion-free sheaves. In this case, the degree $\deg(E)$ is replaced by the second coefficient of the Hilbert polynomial of $E$, with respect to a fixed ample divisor $H$, or equivalently, in the case where the variety is sufficiently regular, by the pairing of the first Chern class of $E$ with the ample numerical class $H^{n-1} \cdot \c_1(E)$. This is called \emph{Mumford--Takemoto stability}, or shortly \emph{slope stability}. An important remark is that it does depend on the choice of the numerical class of $H$.

The fact that moduli spaces exist is much less clear, but a deep result by Donaldson \cite{Don85:anti_self_dual_Yang_Mills} and Uhlenbeck--Yau \cite{UY86:hermitian_Yang_Mills} states that a vector bundle $E$ on a smooth complex projective polarized variety $(X,H)$ is stable if and only if it admits an \emph{irreducible Hermitian--Einstein connection}. This is the so called \emph{Kobayashi--Hitchin correspondence}.

Gieseker, Maruyama, and Simpson (\cite{Gie77:vector_bundles,Mar77:stable_sheavesI,Sim94:moduli_representations}) introduced a definition of stability for coherent sheaves that directly generalizes the GIT approach, and therefore, produces well-behaved moduli spaces.
%By generalizing the GIT approach to stability, a definition of stability for coherent sheaves, which directly produces well-behaved moduli spaces, was introduced by Gieseker, Maruyama, and Simpson (\cite{Gie77:vector_bundles,Mar77:stable_sheavesI,Sim94:moduli_representations}).
This stability, often referred to as \emph{Gieseker stability}, is based on the full Hilbert polynomial and does indeed correspond to stability with respect to an appropriate GIT problem (\cite{HL10:moduli_sheaves, LeP97:lectures_vector_bundles}).
Slope stable vector bundles are Gieseker stable and Gieseker stability also depends on the choice of the polarization. See \cite{Tha96:git_flips, DH98:vgit} for general results on variation of GIT and \cite{EG95:variation_surfaces, FQ95:variation_surfaces, MW97:thaddeus_principle} for complete results on how variation of stability changes the moduli spaces in the case of surfaces.

\subsection{Bridgeland stability}\label{subsec:Bridgeland}

We now consider the bounded derived category $\Db X:=\Db(\Coh X)$ of coherent sheaves on a smooth complex projective variety $X$ (\cite{GM03:homological_algebra,Huy06:fm_transforms,Ver67:verdier_thesis}).

The notion of stability for $\Db X$ comes from the mathematical physics literature. More precisely, as explained in \cite{Dou02:mirror_symmetry}, it is an attempt to understand Dirichlet branes in the context of Kontsevich's Homological Mirror Symmetry (\cite{Kon95:ICM}). The basic idea was that such branes of B-type correspond to objects in the derived category (\cite{Dou01:D-branes_N1susy}). The ones which are actually reached physically (BPS branes) are just a subset $\PP\subset\Db X$, and this subset must depend on so-called ``stringy K\"ahler data''.
These BPS branes can be understood at the large volume limit. They are roughly vector bundles with irreducible Hermitian--Einstein connections. By using the Kobayashi--Hitchin correspondence described in the previous section, they are exactly slope stable vector bundles, 

Since both the GIT approach and the differential geometry approach are more difficult to understand for derived categories, the starting point of Douglas' construction is to formally isolate the properties that stable objects in the derived category should satisfy and how they should change on continuous paths in the K\"ahler moduli space. He called this notion \emph{$\Pi$-stability}.
This definition was formulated mathematically by Bridgeland in \cite{Bri07:stability_conditions} and then further studied by Kontsevich--Soibelman in \cite{KS08:wall_crossing} in the context of counting invariants of Donaldson--Thomas type.

Let $K(\Db X)$ denote the Grothendieck group of $\Db X$, which is isomorphic to the Grothendieck group of $X$.
We abuse notation and denote the class of an object of $\Db X$ in $K(\Db X)$ with the same symbol. The next step is to fix a free abelian group of finite rank $\Lambda$ and a group homomorphism
\[
v\colon K(\Db X) \to \Lambda.
\]
A standard choice of $\Lambda$ is the \emph{numerical Grothendieck group} $K_{\mathrm{num}}(\Db X):=K(\Db X)/\ker(\chi)$ given as the quotient of $K(\Db X)$ by the kernel of the Euler pairing
\[
\chi(E,F):=\sum_i (-1)^i\,\dim\Ext^i(E,F).
\]

The following definition is the latest version from \cite{BLMNPS19:family}. Bridgeland's original definition in \cite{Bri07:stability_conditions} only contains conditions (i)-(iv). We recommend a first time reader to skip the technical parts (v)-(vii).

\begin{defn}\label{def:Bridgeland}
A \emph{Bridgeland stability condition} on $\Db X$ with respect to $(v,\Lambda)$ is a pair $\sigma=(Z,\PP)$ where
\begin{itemize}
    \item $Z\colon \Lambda \to \C$ is a group homomorphism, called \emph{central charge}, and
    \item $\PP=\cup_{\phi\in\R}\PP(\phi)$ is a collection of full additive subcategories $\PP(\phi)\subset \Db X$
\end{itemize}
satisfying the following conditions:
\begin{enumerate}
    \item for all nonzero $E\in\PP(\phi)$ we have $Z(v(E))\in \R_{>0}\cdot e^{i\pi\phi}$;
    \item for all $\phi\in\R$ we have $\PP(\phi+1)=\PP(\phi)[1]$;
    \item if $\phi_1>\phi_2$ and $E_j\in\PP(\phi_j)$, then $\Hom(E_1,E_2)=0$;
    \item (Harder--Narasimhan filtrations) for all nonzero $E\in\Db X$ there exists a finite sequence of morphisms
    \[
    0=E_0 \xrightarrow{s_1} E_1 \xrightarrow{s_2}\dots \xrightarrow{s_m} E_m=E
    \]
    such that the cone of $s_j$ is in $\PP(\phi_j)$ for some sequence $\phi_1>\phi_2>\dots>\phi_m$ of real numbers;
    \item\label{eq:supportproperty} (support property) there exists a quadratic form $Q$ on the vector space $\Lambda_{\R}$ such that
    \begin{itemize}
        \item the kernel of $Z$ is negative definite with respect to $Q$, and
        \item for all $E\in\PP(\phi)$ for any $\phi$ we have $Q(v(E))\geq0$;
    \end{itemize}
    \item\label{eq:openness} (openness of stability) for every scheme $T$ and every $E\in\mathrm{D}_{T\text{-perf}}(X\times T)$ the set
    \[
    \left\{t\in T \, :\, E_t\in \PP(\phi) \right\}
    \]
    is open;
    \item\label{eq:boundedness} (boundedness) for any $v\in\Lambda$ and $\phi\in\R$ such that $Z(v)\in \R_{>0}\cdot e^{i\pi\phi}$ the functor
    \[
    T \mapsto \mathfrak{M}_{\sigma}(v,\phi)(T):=\left\{E\in\mathrm{D}_{T\text{-perf}}(X\times T)\,:\, E_t\in\PP(\phi)\text{ and }v(E_t)=v, \text{ for all } t\in T \right\}
    \]
    is bounded.
\end{enumerate}
\end{defn}

\begin{rmk}
\label{rmk:def_stability}
(a) The objects in $\PP(\phi)$ are called \emph{$\sigma$-semistable} of phase $\phi$. The simple objects in the abelian category $\PP(\phi)$ are called \emph{$\sigma$-stable}.

(b) The phases of the first and last factor in the Harder--Narasimhan filtration of an object $E$ are denoted $\phi^+(E)$ and $\phi^-(E)$.

(c) The support property can be equivalently stated as follows: We fix a metric $\| - \|$ on $\Lambda_\R$. There exists a constant $C>0$ such that for all $E\in\PP$
\[
\| v(E)\| \leq C\cdot |Z(v(E)|.
\]

(d) Openness and boundedness imply that moduli spaces of (semi)stable objects exist, even if in general there is no GIT problem associated to such stability. By using work in \cite{Lie06:moduli,AP06:constant_t_structures,Tod08:K3Moduli}, it is a consequence of the general foundational theory developed in \cite{AHLH18:moduli} that the moduli spaces $M_\sigma(v,\phi)$ parametrizing S-equivalence classes of semistable objects of class $v$ and phase $\phi$ exist and are proper algebraic spaces. Moreover, if the morphism $v$ factors through $K_{\mathrm{num}}(\Db X)$, by the results in \cite{BM14:projectivity}, there is a real numerical Cartier divisor class $\ell_{\sigma}$ on  $M_\sigma(v,\phi)$ which is strictly nef (see \cite[Theorem 21.24 and Theorem 21.25]{BLMNPS19:family}).

(e) An interesting elementary result (\cite[Proposition 5.3]{Bri07:stability_conditions}), which is very useful in constructing examples of Bridgeland stability conditions is a reformulation of Definition \ref{def:Bridgeland} in terms of slope, thus extending the numerical definition of stability \eqref{eq:MumfordStability} formally to the derived category. More precisely, the extension-closed category $\AA:=\PP((0,1])$ generated by all semistable objects with phases in the interval $(0,1]$ is an abelian category. It is furthermore the heart of a bounded t-structure on $\Db X$. The real and imaginary parts of the central charge $Z$ behave like a degree and rank function on $\AA$: for a nonzero object $E\in\AA$, $\Im Z(E)\geq0$ and if $\Im Z(E)=0$, then $\Re Z(E) <0$. An object $E\in\AA$ is $\sigma$-semistable if and only if it is slope-semistable with respect to the slope $\mu_\sigma:=-\frac{\Re Z}{\Im Z}$. The converse is also true. Let $Z$ be a central charge on the heart of a bounded t-structure $\AA$ satisfying the above numerical properties. Then we define (semi)stable objects in $\AA$ as slope-(semi)stable and extend them by shifts to $\Db X$.
% Given a central charge $Z$ on the heart of a bounded t-structure $\AA$ satisfying the above numerical properties, by defining (semi)stable objects in $\AA$ as slope-(semi)stable and by extending them by shifts in $\Db X$,
We obtain a stability condition in $\Db X$ once Harder--Narasimhan filtrations exist in $\AA$ and the remaining properties \ref{eq:supportproperty}, \ref{eq:openness}, \ref{eq:boundedness} are satisfied. When we want to stress the category $\AA$ in the definition of stability we use the notation $\sigma=(Z,\AA)$.

(f) The definition of stability conditions is more general and can be given for any triangulated category with certain regularity and base change properties, even in the relative context. See \cite{BLMNPS19:family} for more details.
\end{rmk}

Let $\Stab_{(\Lambda,v)}(\Db X)$ be the set of stability conditions on $\Db X$, with respect to a fixed $(\Lambda,v)$. We will use the simplified notation $\Stab(\Db X)$ when the dependence on $(\Lambda,v)$ is clear.
The set $\Stab(\Db X)$ is endowed with the coarsest topology such that $\phi^+(E)$, $\phi^-(E)$ for all $E \in \Db X$, and the map $\mathcal{Z}\colon\Stab(\Db X)\to \Hom(\Lambda,\C)$ given by $(Z,\PP)\mapsto Z$ are all continuous.
The main result of \cite{Bri07:stability_conditions} is then the following.

\begin{thm}[Bridgeland Deformation Theorem]\label{thm:BridgelandDefo}
The map $\mathcal{Z}\colon\Stab(\Db X)\to \Hom(\Lambda,\C)$ given by $(Z,\PP)\mapsto Z$ is a local homeomorphism. In particular, $\Stab(\Db X)$ is a complex manifold of dimension $\rk(\Lambda)$.
\end{thm}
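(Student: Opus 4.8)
The plan is to reduce everything to showing that $\mathcal{Z}$ is a local homeomorphism, since then the complex manifold structure is automatic: the target $\Hom(\Lambda,\C)$ is a complex vector space of dimension $\rk(\Lambda)$, and a local homeomorphism onto an open subset transports this structure and its dimension to $\Stab(\Db X)$. Continuity of $\mathcal{Z}$ holds by the very definition of the topology on $\Stab(\Db X)$, so the work is to produce, near a fixed $\sigma=(Z,\PP)$, a continuous local inverse. First I would set up the generalized metric on slicings,
\[
d(\PP_1,\PP_2) = \sup_{0\neq E\in\Db X}\left\{\, |\phi^+_{\PP_1}(E) - \phi^+_{\PP_2}(E)|,\ |\phi^-_{\PP_1}(E) - \phi^-_{\PP_2}(E)| \,\right\},
\]
and record the elementary comparison estimates relating $d(\PP_1,\PP_2)$ to $\|Z_1 - Z_2\|$. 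These make precise the idea that nearby stability conditions have nearby central charges and nearby phases, and they immediately yield local injectivity, since two stability conditions with the same $Z$ that are $d$-close are forced to coincide by the phase-rigidity the estimates provide.

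The core is a deformation lemma: given $\sigma=(Z,\PP)$ with support property controlled by a quadratic form $Q$, there is $\varepsilon>0$ such that every group homomorphism $W\colon\Lambda\to\C$ with $|W(\gamma)-Z(\gamma)|\leq\varepsilon\|\gamma\|$ for all $\gamma$ is the central charge of a unique nearby stability condition $\sigma'=(W,\PP')$ satisfying the support property with the same $Q$. The mechanism is to keep the $\sigma$-semistable objects as building blocks and merely re-assign their phases according to $W$. The support property is exactly what supplies the needed uniformity: for $\sigma$-semistable $E$ one has $\|v(E)\|\leq C\,|Z(v(E))|$, so $|W(v(E))-Z(v(E))|\leq \varepsilon C\,|Z(v(E))|$, which forces the $W$-phase of $E$ to differ from its $Z$-phase by at most a fixed bound $O(\varepsilon C)$, uniformly over all semistable $E$. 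Choosing $\varepsilon$ small enough that this bound lies below $1/2$, I would then define $\PP'(\psi)$ to be generated by the $\sigma$-semistable objects whose re-assigned $W$-phase equals $\psi$.

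The step I expect to be the main obstacle is verifying that these re-assembled subcategories genuinely form a slicing---in particular, that Harder--Narasimhan filtrations exist for $\sigma'$. The difficulty is that a $\sigma$-stable object may cease to be $\sigma'$-semistable, and new semistable objects can appear as extensions, so one must show the perturbed slope function still admits finite HN filtrations with well-defined phases. Here I would argue interval by interval: for a short interval the category $\PP((a,b))$ is quasi-abelian, the uniform phase bound keeps the relevant objects inside a slightly enlarged interval, and one runs the standard HN existence argument---ruling out infinite descending or ascending chains of subobjects of strictly increasing or decreasing slope---using the support property once more to exclude the pathological chains. Finally I would check uniqueness of $\PP'$ and continuity of $W\mapsto\sigma'$, which supplies the continuous local inverse; combined with the local injectivity above, this gives the local homeomorphism and hence the theorem.
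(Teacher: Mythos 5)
The paper offers no proof of this statement: it is quoted as the main result of \cite{Bri07:stability_conditions}, with the quantitative refinement via the quadratic form $Q$ deferred to \cite[Appendix A]{BMS16:abelian_threefolds} and \cite{Bay16:short_proof} in Remark~\ref{rem:BridgelandDefoThm}(a). So there is nothing in-paper to compare against; judged on its own, your outline follows exactly the standard route of those references --- the generalized metric on slicings, local injectivity from the comparison estimates, and a deformation lemma in which the support property $\|v(E)\|\leq C\,|Z(v(E))|$ converts the bound $\|W-Z\|\leq\varepsilon$ into a uniform bound on the phase shift of every semistable object. That is the right skeleton, and your identification of the HN-existence step as the crux is also correct.

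The one place the sketch would not survive being written out as stated is the definition of the new slicing. Declaring $\PP'(\psi)$ to be \emph{generated by the $\sigma$-semistable objects whose re-assigned $W$-phase equals $\psi$} does not produce the correct category: as you yourself note two sentences later, objects that are $W$-semistable of phase $\psi$ can arise as extensions of $\sigma$-semistable objects of \emph{distinct} nearby $\sigma$-phases, and conversely a $\sigma$-semistable object can fail to be $W$-semistable; a category generated by re-labelled old semistables is neither contained in nor contains the right one. The fix is not to ``verify'' this definition but to replace it: one defines $\PP'(\psi)$ as the set of $W$-semistable objects (with respect to the slope induced by $W$) inside the thickened quasi-abelian category $\PP((\psi-\eta,\psi+\eta))$, where $\eta$ dominates the uniform phase shift, and only then runs the interval-by-interval HN argument you describe. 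With that correction, and with the support property invoked once more to rule out infinite chains (this is where the discreteness of the image of $Z$ on the relevant cone enters), the argument closes in the standard way.
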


\begin{rmk}\label{rem:BridgelandDefoThm}
(a) The theorem can be made more precise by explicitly describing the local structure in terms of the quadratic form $Q$. See \cite[Appendix A]{BMS16:abelian_threefolds} for more details.

(b) There are two continuous group actions on $\Stab(\Db X)$. The universal cover $\widetilde{\mathrm{GL}}_2^+(\R)$ acts from the right on $\Stab(\Db X)$ by extending the corresponding action of $\mathrm{GL}_2^+(\R)$ on $\Hom(\Lambda,\C)$. Since Definition~\ref{def:Bridgeland} behaves well with respect to autoequivalences, the group $\Aut(\Db X)$ acts by isometries from the left on $\Stab(\Db X)$. For details see \cite[Lemma 8.2]{Bri07:stability_conditions}.

(c) A fundamental property of stability conditions in the derived category is that, in contrast to the classical notions of stability for sheaves in higher dimension, there is a locally-finite wall and chamber structure in $\Stab(\Db X)$ (see \cite[Proposition 9.3]{Bri08:stability_k3}). More precisely, if we fix the numerical class $v$ and consider the moduli spaces $M_{\sigma}(v)$ as $\sigma$ varies in $\Stab(\Db X)$, then $M_{\sigma}(v)$ and $M_{\sigma'}(v)$ are isomorphic as long as $\sigma$ and $\sigma'$ are in the same chamber. We refer to Proposition~\ref{prop:structure} below for an explicit statement in the case of K3 surfaces of Picard rank one.

(d) In the original picture coming from Douglas' work there should exist a limit point for the space of stability conditions where stability reduces to the usual notions for sheaves. This is one of the starting points in the construction of stability conditions (\cite{Bri08:stability_k3,BMT14:stability_threefolds,BMS16:abelian_threefolds}). In the case of surfaces this is \cite[Proposition 14.2]{Bri08:stability_k3}, extended in \cite[Section 6.2]{Tod08:K3Moduli}.
See Proposition~\ref{prop:large_volume_limit} below in the case of K3 surfaces.
%\footnote{\color{red} We should probably also cite Arcara-Bertram here?}
\end{rmk}

\begin{ex}
\label{ex:curves}
Let $C$ be a smooth projective curve.
Then $\sigma_0=(Z_0,\Coh C)$, where
\[
Z_0 (-) = - \deg(-) + i \rk(-),
\]
is a Bridgeland stability condition with respect to $\Lambda=N(\D^b(C))\cong \Z^2$ and
\[
v = (\rk,\deg)\colon K(\Db C)\to \Lambda.
\]
If the genus of $C$ is strictly positive, then, up to the action of $\widetilde{\mathrm{GL}}_2^+(\R)$, these are the only stability conditions (\cite{Bri07:stability_conditions,Mac07:curves}).
Semistable objects in $\Coh C$ are exactly torsion sheaves and slope semistable vector bundles.
\end{ex}

Stability conditions have been applied to problems in algebraic geometry, for instance to the study of quadratic differentials (\cite{BS15:quadratic_differentials}), to moduli spaces on the projective plane (\cite{ABCH13:hilbert_schemes_p2,CHW17:effective_cones_p2,LZ19:NewStabilityP2,Bou19:takahashi_conjecture}), and to the theory of Hyperk\"ahler manifolds (\cite{MYY14:stability_k_trivial_surfaces,YY14:stability_abelian_surfaces,BM14:projectivity,BM14:stability_k3,BLMNPS19:family}).
In this note, we will present two applications: how to give a new proof for the Brill--Noether theorem for curves (see Section \ref{sec:BN}) and how to bound the genus of space curves (see Section \ref{sec:genus}).

%%%%%%%%%%%%%%%%%%%%%%%%%%%%%%%%%%%%%%%%%%%%%
%%%%%%%%%%%%%%%%%%%%%%%%%%%%%%%%%%%%%%%%%%%%%
%%%%%%%%%%%%%%%%%%%%%%%%%%%%%%%%%%%%%%%%%%%%%

\section{The Brill--Noether Theorem}
\label{sec:BN}

The Brill--Noether Theorem is a fundamental result in the theory of curves. It does contain information on morphisms from a general curve to projective space of a given degree. The result was originally proved in \cite{GH80:BN} by using degeneration methods (a simpler proof is in \cite{EH83:BN}). See also \cite{CDPR12:BN} for a recent proof using tropical geometry techniques.
%This has many applications; for instance the Brill--Noether divisor is used in \cite{HM82:xxx} to show that the moduli space of curves $M_g$ is of general type, for genus odd and $g\geq 25$ (thus answering a question from \cite[Page 37]{Mum75:Curves_their_Jacobians}).
Instead, the approach to the Brill--Noether Theorem by Lazarsfeld in \cite{Laz86:brill_noether} is to use curves on K3 surfaces.
In this section, we present ideas of Bayer from \cite{Bay18:BrillNoether} to give a new proof of Lazarsfeld's theorem by using wall-crossing in Bridgeland stability. 
These techniques also lead to new results: for instance, there are applications to Mukai's program on reconstructing a K3 surface from a curve (\cite{ABS14:Mukai_program,Fey17:Mukai_program}) and to higher rank Clifford indices of curves (\cite{FL18:higher_Clifford}).

%A celebrated theorem by Lazarsfeld is his Brill-Noether theorem for curves on K3 surfaces. We will present ideas of Bayer from  \cite{Bay18:BrillNoether} using wall-crossing in Bridgeland stability for a new proof.

Let $X$ be a K3 surface. For simplicity of the exposition, we assume that $X$ has Picard rank one, i.e., $\Pic(X) = \Z \cdot H$ for some ample divisor $H$. Let $C$ be any smooth curve in the linear system $|H|$, $d \in \Z_{\geq 1}$, and $r \in \Z_{\geq 0}$. By definition the \emph{Brill--Noether variety} $W^r_d(C)$ is the closed subset of $\Pic_d(C)$ consisting of those degree $d$ lines bundle $L$ on $C$ for which $h^0(L) \geq r + 1$. The \emph{Brill--Noether number} is the naive expected dimension of $W^r_d(C)$ given by $\rho(r, d, g) := g - (r + 1)(g - d + r)$, where $g$ is the genus of $C$.

\begin{thm}[Lazarsfeld]
\label{thm:brill_noether}
The variety $W^r_d(C)$ is non-empty if and only if $\rho(r, d, g) \geq 0$. Moreover, in that case it has expected dimension $\min\{\rho(r, d, g),\, g\}$.
\end{thm}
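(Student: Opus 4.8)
\emph{Transporting the problem to the K3 surface.} The plan is to recast $W^r_d(C)$ as a locus inside a moduli space of Bridgeland-stable objects on $\Db X$ and to study it by wall-crossing. Since $C\in|H|$, for a degree $d$ line bundle $L$ the pushforward $\iota_*L$ along $\iota\colon C\into X$ is a pure one-dimensional sheaf with Mukai vector $v:=v(\iota_*L)=(0,H,d+1-g)$, so that $\langle v,v\rangle=H^2=2g-2$; here $\langle-,-\rangle$ is the Mukai pairing, normalized so that $\langle v(E),v(F)\rangle=-\chi(E,F)$. By the large volume limit description (Proposition~\ref{prop:large_volume_limit}), for $\sigma$ near that limit these sheaves are $\sigma$-stable and $M_\sigma(v)$ is the Beauville--Mukai system: a smooth projective holomorphic symplectic variety of dimension $\langle v,v\rangle+2=2g$, fibred over $|H|\cong\P^g$ with the fibre over $[C]$ identified with $\Pic_d(C)$. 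The first, purely cohomological, observation is that $h^0(C,L)=\hom(\OO_X,\iota_*L)$, so inside this fibre
\[
W^r_d(C)=\left\{[E]\in M_\sigma(v)\ \text{over}\ [C]\ :\ \hom(\OO_X,E)\geq r+1\right\}.
\]
It therefore suffices to control, inside the smooth symplectic space $M_\sigma(v)$, the jumping locus where $\OO_X$ has many maps into $E$, and to intersect it with a single Lagrangian fibre.

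\emph{The spherical class and the central numerical identity.} Next I would exploit that $\OO_X$ is spherical, $\langle v(\OO_X),v(\OO_X)\rangle=-2$, and that $\langle v,v(\OO_X)\rangle=g-d-1$. I would follow a path from the large volume limit to the first wall $\mathcal W$ for $v$, namely the wall on which $\OO_X$ acquires the same phase as the objects of class $v$; by the wall and chamber structure (Proposition~\ref{prop:structure}) such a wall is where a destabilization by $\OO_X$ first becomes possible. On $\mathcal W$ the evaluation map gives a triangle $\OO_X^{\oplus k}\to E\to F$ with $k=\hom(\OO_X,E)$, and crossing $\mathcal W$ replaces $E$ by $F=\cone(\OO_X^{\oplus k}\to E)$, which on the far chamber is $\sigma'$-stable of class $v(F)=v-k\,v(\OO_X)$; the inverse is the spherical twist $\ST_{\OO_X}$. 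Setting $k=r+1$ and $w:=v-(r+1)\,v(\OO_X)$, the Lazarsfeld--Mukai class, a short computation gives
\[
\langle w,w\rangle=\langle v,v\rangle-2(r+1)\langle v,v(\OO_X)\rangle+(r+1)^2\langle v(\OO_X),v(\OO_X)\rangle=2\rho-2.
\]
This single identity drives the two halves of the theorem.

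\emph{Non-emptiness, emptiness, and the dimension.} For emptiness, suppose $W^r_d(C)\neq\emptyset$: then some $F$ of class $w$ is $\sigma'$-stable, and the universal bound $\langle v(F),v(F)\rangle\geq-2$ for semistable objects on a K3 forces $2\rho-2\geq-2$, i.e.\ $\rho\geq0$; contrapositively $\rho<0$ gives $W^r_d(C)=\emptyset$. For $\rho\geq0$ the moduli space $M_{\sigma'}(w)$ is non-empty of dimension $\langle w,w\rangle+2=2\rho$, and reversing the construction by $\ST_{\OO_X}$ produces objects of class $v$ with $r+1$ sections supported on smooth members of $|H|$, giving non-emptiness. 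For the dimension, the condition $\hom(\OO_X,E)\geq r+1$ cuts out $E$ as a determinantal locus in $M_\sigma(v)$ from a two-term complex computing $\RHom(\OO_X,-)$: since $\chi(\OO_X,E)=-\langle v(\OO_X),v\rangle=d+1-g$ is constant, its expected codimension is $(r+1)\bigl(g-d+r\bigr)=g-\rho$. Intersecting this expected-codimension $g-\rho$ locus with the $g$-dimensional Lagrangian fibre $\Pic_d(C)$ yields $\dim W^r_d(C)=\min\{\rho,g\}$, the cap by $g$ occurring exactly when the stratum fills an entire fibre.

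\emph{The main obstacle.} The technical heart, and the step I expect to be hardest, is to make the wall-crossing in the second paragraph rigorous: that $\mathcal W$ really is the relevant wall, that every destabilization of a class-$v$ object there is controlled solely by $\OO_X$ so that $F=\cone(\OO_X^{\oplus k}\to E)$ is genuinely $\sigma'$-stable and $\ST_{\OO_X}$ induces a bijection on moduli, and that the jumping loci have exactly the expected dimension and meet each fibre properly. It is precisely this transversality, classically supplied by the Gieseker--Petri theorem, that the K3 geometry replaces, and one must further check that generic points of the strata are honest line bundles on the smooth curve $C$ rather than sheaves on degenerate members of $|H|$, the non-globally-generated line bundles being handled uniformly by working in $\Db X$ rather than with the Lazarsfeld--Mukai bundle directly.
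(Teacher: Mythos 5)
Your overall strategy is the paper's: you identify the correct wall $W(\OO_X,v)$, the correct destabilizing class $w=v-(r+1)v(\OO_X)=(-r-1,H,d-g-r)$, and the identity $\langle w,w\rangle=2\rho-2$ which, combined with Lemma~\ref{lem:necessary_k3} and Theorem~\ref{thm:existence_k3}, drives both the emptiness and the non-emptiness statements. You also correctly flag that the hard point is showing that the evaluation map $\OO_X^{\oplus(r+1)}\to E$ is injective in the tilted heart with stable quotient; this is exactly Lemma~\ref{lem:classification_wall_L}. Two of your steps, however, diverge from what actually works, and one of them is a genuine gap.

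The gap is the dimension count. You propose to read off $\dim W^r_d(C)$ from the expected codimension $(r+1)(g-d+r)$ of a determinantal jumping locus intersected with a Lagrangian fibre. But expected codimension only bounds the codimension from above (the dimension from below); the content of the theorem is the opposite inequality, and that is precisely the Petri-type transversality this method is supposed to avoid assuming. The paper gets the exact dimension differently: since the destabilizing sequence is the Harder--Narasimhan filtration below the wall, $F\mapsto G$ is a well-defined map $V^r_d(|H|)\to M^{\operatorname{stable}}_{\sigma}(w)$ whose fibres are identified with $\Gr(r+1,\ g-d+2r+1)$ (using $\ext^1(G,\OO_X)=\langle w,v(\OO_X)\rangle=g-d+2r+1$, which needs the vanishing of $\Hom$ and $\Ext^2$ coming from stability of both $G$ and $\OO_X$ on the wall). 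This gives $\dim V^r_d(|H|)=2\rho+(r+1)(g-d+r)=\rho+g$ exactly, and one then divides by the $g$-dimensional base $|H|$ using equidimensionality of the fibres of the support map --- a step that requires the holomorphic symplectic geometry of $M$, not just wall-crossing. Relatedly, your inverse construction is not the spherical twist $\ST_{\OO_X}$: the twist would take the universal extension by all of $\Ext^1(G,\OO_X)\cong\C^{\,g-d+2r+1}$ and produce objects with $g-d+2r+1$ sections rather than $r+1$. One must instead choose an $(r+1)$-dimensional quotient $V$ of $\Ext^1(G,\OO_X)^{\vee}$ and form the extension $0\to\OO_X\otimes V\to F'\to G\to 0$; this choice is exactly where the Grassmannian fibres come from. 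Finally, the wall-crossing analysis is carried out under the hypothesis $0<d\leq g-1$, so you are missing the preliminary Serre-duality reduction $W^r_d(C)\to W^{r-d+g-1}_{2g-2-d}(C)$ and the observation that $\rho$ is invariant under it; without it your codimension $(r+1)(g-d+r)$ can be negative and the argument does not start.
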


The geometric structure of $W^r_d(C)$ is also completely known for a general curve $C$: the variety $W^r_d(C)$ is smooth away from $W^{r+1}_d(C)$, as well as integral when $\rho(r,d,g)\geq 1$.
We refer to the bibliographical notes of \cite[Section V]{ACGH85:curves} for an overview on the rich history of Brill--Noether theory, involving many authors including Kempf, Kleiman--Laksov, Fulton, Gieseker (and based on previous work by Severi, Castelnuovo, Petri, and on unpublished work by Mumford).

%The original theorem by Lazarsfeld is a slightly more general statement that implies this. Moreover, he does not require Picard rank one. Instead, Lazarsfeld asks that any curve in $|H|$ is integral.

The wall-crossing techniques in \cite{Bay18:BrillNoether} deal more naturally with a subset of $W^r_d(C)$, and allow to treat singular curves as well.
Let $C\in|H|$ be any curve (integral, by assumption).
Let $V^r_d(C)$ denote the constructible set of pure sheaves $F \in \Coh X$ supported on $C$ with rank one, $h^0(F) = r + 1$, and $\chi(F) = d + 1 - g$. Moreover, we define
\[
V^r_d(|H|) = \bigcup_{C \in |H|} V^r_d(C).
\]

We will show how to prove the following theorem which is the key step in \cite{Bay18:BrillNoether}. 

\begin{thm}
\label{thm:Bayer}
Assume $0 < d \leq g - 1$. The set $V^r_d(|H|)$ is non-empty if and only if $\rho(r, d, g) \geq 0$. Moreover, in that case there is a morphism $V^r_d(|H|) \to M$, where $M$ is a non-empty open subset of a smooth projective irreducible holomorphic symplectic variety of dimension $2\rho(r, d, g)$. Finally, each fiber is isomorphic to a Grassmannian variety of $(r + 1)$-dimensional quotients of a vector space of dimension $g - d + 2r + 1$.
\end{thm}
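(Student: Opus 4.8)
The plan is to attach to each $F\in V^r_d(C)$ the cone of its evaluation map and to recognize this cone as a point of a moduli space of Bridgeland-stable objects of one lower expected dimension. Write $W:=\Hom(\OO_X,F)$, so that $\dim W=h^0(F)=r+1$, and set
\[
\tilde F:=\cone\bigl(\OO_X\otimes W\xrightarrow{\ \ev\ }F\bigr)\in\Db X.
\]
Since $v(\OO_X)=(1,0,1)$ and $v(F)=(0,H,d+1-g)$ in the Mukai lattice, and $H^2=2g-2$ by adjunction, the object $\tilde F$ has Mukai vector
\[
w:=v(\tilde F)=v(F)-(r+1)\,v(\OO_X)=\bigl(-(r+1),\,H,\,d-g-r\bigr),
\]
and a direct computation gives $w^2=H^2+2(r+1)(d-g-r)=2\rho(r,d,g)-2$. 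Thus a moduli space of $\sigma$-stable objects of class $w$ has expected dimension $w^2+2=2\rho(r,d,g)$, exactly the dimension sought for $M$. The strategy is to realize $F\mapsto\tilde F$ as a morphism $V^r_d(|H|)\to M_\sigma(w)$ for a suitable $\sigma$, identify $M$ with its image, and compute the fibers.

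First I would fix the stability condition. Within the standard $(\alpha,\beta)$-family of Bridgeland stability conditions on $\Db X$ there is a numerical wall $\mathcal W$ for the class $v(F)$ along which $\OO_X$ and $F$ acquire equal phase; the $r+1$ global sections then exhibit $\OO_X\otimes W$ as a subobject of $F$ in the relevant tilted heart, with quotient $\tilde F$. Choosing $\sigma$ on the side of $\mathcal W$ where the phase of $\OO_X$ exceeds that of $F$, I would show that for every $F\in V^r_d(C)$ the complex $\tilde F$ is $\sigma$-stable of class $w$. Granting this, the results on moduli of Bridgeland-stable objects on a K3 surface apply: as $H$ generates $\Pic(X)$ the class $w$ is primitive, so for $\sigma$ generic with respect to $w$ the space $M_\sigma(w)$ is either empty or a smooth projective irreducible holomorphic symplectic variety of dimension $w^2+2=2\rho(r,d,g)$, and it is non-empty exactly when $w^2\geq-2$, i.e.\ when $\rho(r,d,g)\geq0$. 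This already gives one direction: if $\rho<0$ there are no $\sigma$-semistable objects of class $w$, hence no $F\in V^r_d(|H|)$, since any such $F$ would produce a $\sigma$-stable $\tilde F$.

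For the converse and the fibers, assume $\rho\geq0$ and run the construction backwards. Rotating the defining triangle gives $F=\cone\bigl(\tilde F[-1]\xrightarrow{\ \psi\ }\OO_X\otimes W\bigr)$, where
\[
\psi\in\Hom\bigl(\tilde F[-1],\OO_X\otimes W\bigr)=\Ext^1(\tilde F,\OO_X)\otimes W\cong U^{*}\otimes W=\Hom(U,W),\qquad U:=\Ext^1(\OO_X,\tilde F),
\]
using the Serre duality isomorphism $\Ext^1(\tilde F,\OO_X)\cong\Ext^1(\OO_X,\tilde F)^{*}$ on $X$. Applying $\Hom(\OO_X,-)$ to the triangle, together with $H^1(\OO_X)=0$, $H^2(\OO_X)=\C$ and $\ext^2(\OO_X,F)=0$, yields $\hom(\OO_X,\tilde F)=0$ and $\dim U=h^1(F)+(r+1)=(g-d+r)+(r+1)=g-d+2r+1$. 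I would then check that $F$ is a genuine pure rank-one sheaf supported on a curve in $|H|$, with the prescribed invariants, precisely when $\psi$ is surjective, and that two surjections give isomorphic $F$ if and only if they differ by the natural action of $\Aut(\OO_X\otimes W)=\GL(W)$. Hence the fiber over $\tilde F$ is the Grassmannian of $(r+1)$-dimensional quotients of the $(g-d+2r+1)$-dimensional space $U$, as claimed; these fibers are non-empty because $g-d+r\geq1$ when $d\leq g-1$, so $V^r_d(|H|)\neq\varnothing$ whenever $\rho\geq0$. The image $M$ is the open locus of $M_\sigma(w)$ on which $\hom(\OO_X,\tilde F)=0$ and a surjection $U\twoheadrightarrow W$ exists.

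The main obstacle is the stability claim for $\tilde F$ in both directions: that the evaluation cone of every $F\in V^r_d$ is genuinely $\sigma$-stable and not merely semistable, and conversely that each surjection $\psi$ produces an honest sheaf. Here one must be careful that $\tilde F$ is a genuine two-term complex rather than a sheaf (its sheaf cohomology records the kernel and cokernel of $\ev$), which is exactly why stability in $\Db X$, and not classical stability, is the right tool. I would control this using the support property and the Mukai-pairing bounds on the discriminant to exclude competing destabilizing subobjects and any intermediate wall between the large-volume chamber, where $F$ is Gieseker- hence Bridgeland-stable, and $\sigma$, while using the classification of spherical classes on $X$ to pin down the role of $\OO_X$. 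A secondary technical point is to upgrade this pointwise bijection to an actual morphism of varieties with a Grassmannian-bundle structure, which requires the relative theory of stability conditions, universal families over $X\times T$, and the openness property of Definition~\ref{def:Bridgeland}.
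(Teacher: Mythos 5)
Your construction is the same as the paper's: your $\tilde F=\cone(\OO_X\otimes\Hom(\OO_X,F)\to F)$ is exactly the quotient $G$ in the paper's destabilizing sequence along the wall $W(\OO_X,v)$, the morphism $F\mapsto\tilde F$ to the moduli space of class $(-r-1,H,d-g-r)$, the dimension count via $w^2+2=2\rho$, and the Grassmannian fibers of $(r+1)$-dimensional quotients of $\Ext^1(G,\OO_X)^\vee\cong\Ext^1(\OO_X,G)$ all match the paper's proof. The stability verifications you defer (that $W(\OO_X,v)$ is the largest wall, that the evaluation map is injective in the tilted heart with stable cokernel, and that the reverse extensions are stable above the wall) are precisely the content of the paper's Lemma~\ref{lem:classification_wall_L} and the second half of its proof, and the tools you name for them are the ones the paper uses.
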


A fair warning: we will simply describe the morphism, $V^r_d(|H|)$, and the fibers set-theoretically and ignore any further issues. To get Theorem~\ref{thm:brill_noether} from this requires arguments disjoint from stability. To stay within the scope of these notes we refer to \cite{Bay18:BrillNoether} for details and simply give some brief ideas on how Theorem \ref{thm:Bayer} implies Theorem \ref{thm:brill_noether}.

The first step is the reduction to $d \leq g - 1$ or equivalently $\chi(L) = d - g + 1 \leq 0$, in the case of a smooth curve $C$. If $\chi(L) > 0$, then by Serre duality $\chi(L^{\vee} \otimes \omega_C) = -\chi(L) < 0$. The degree of $L^{\vee} \otimes \omega_C$ is given by $2g - 2 - d$. Now $h^0(L) \geq r + 1$ if and only if $h^0(L^{\vee} \otimes \omega_C) = h^1(L) = h^0(L) - \chi(L) \geq r - d + g$. Note that $\rho(r, d, g) = \rho(r - d + g - 1, 2g - 2 - d, g)$, and we constructed a bijective correspondence $W^r_d(C) \to W^{r - d + g - 1}_{2g - 2 - d}(C)$. Note in particular, that if $\min\{\rho(r, d, g),\, g\} = g$, then there is no condition on the line bundle. In particular, all line bundles with the correct invariants are in $W^r_d(C)$ and this case is trivial.

The second step is to connect the statement about $V^r_d(|H|)$ to $V^r_d(C)$ for a single curve (here the curve $C$ could be even singular). The idea is to use the morphism $\varphi\colon V^r_d(|H|) \to |H| = \P^g$ that maps every such line bundle to its support. For any $C \in |H|$ the fiber $\varphi^{-1}(C)$ is precisely $V^r_d(C)$. Therefore, if $V^r_d(|H|)$ is empty, then so is $V^r_d(C)$. The converse is more complicated and requires techniques related to holomorphic symplectic varieties, analyzing the morphism $V^r_d(|H|)\to M$. The conclusion is that $\varphi$ is surjective and all its fibers have the same dimension. Given the description of $V^r_d(|H|)$ in Theorem~\ref{thm:Bayer}, we know
\begin{align*}
\dim V^r_d(|H|) &= \dim M + \dim \Gr(r + 1, g - d + 2r + 1) \\
&= 2\rho(r, d, g) + (r + 1)(g - d + r) \\
&= \rho(r, d, g) + g.
\end{align*}
This implies $\dim V^r_d(C) = \dim V^r_d(|H|) - g = \rho(r, d, g)$. Lastly, $V^r_d(C)$ is related to $W^r_d(C)$ by
\[
W^r_d(C) = \overline{V^r_d(C)} = \bigcup_{r' \geq r} V^{r'}_d(C).
\]

\begin{rmk}
A simple further argument, when $C$ is smooth, $d \leq g - 1$, and $r\geq1$, also implies that the locus in $W^r_d(C)$ consisting of those line bundles which are globally generated has also dimension $\rho(r, d, g)$, the original formulation of Theorem~\ref{thm:brill_noether}.
\end{rmk}

%A careful analysis of of the complement leads to Theorem \ref{thm:brill_noether}.

\subsection{Stability on K3 surfaces}

The strategy to prove Theorem~\ref{thm:Bayer} is to analyze stability of elements $F \in V^r_d(|H|)$ as torsion sheaves on $X$. Instead of classical notions of stability, we will use Bridgeland stability conditions on K3 surfaces. For details beyond this overview we refer to the original source \cite{Bri08:stability_k3}.
%\footnote{\color{red} Maybe we should add some more references here.}

In order to construct abelian categories different from $\Coh X$ the theory of \emph{tilting} is used. We refer to \cite{HRS96:tilting} for technical details of tilting. For any $\beta \in \R$, we define
\begin{align*}
    \TT^{\beta} &:= \{E \in \Coh X : \ \text{all quotients $E \onto Q$ satisfy $\mu(Q) > \beta$}\}, \\
    \FF^{\beta} &:= \{E \in \Coh X : \ \text{all non-trivial subobjects $K \into E$ satisfy $\mu(K) \leq \beta$}\}. 
\end{align*}
The \emph{tilted category} $\Coh^{\beta}X$ is defined as the smallest extension closed subcategory of $\Db X$ containing both $\TT^{\beta}$ and $\FF^{\beta}$. Said differently, $\Coh^{\beta}X$ consists of all those complexes $E \in \Db X$ for which $\HH^i(E) = 0$ for $i \neq -1, 0$, $\HH^0(E) \in \TT^{\beta}$, and $\HH^{-1}(E) \in \FF^{\beta}$. This category turns out to be abelian, and a sequence of maps
\[
0 \to A \to E \to B \to 0
\]
is short exact if and only if the induced sequence of maps
\[
A \to E \to B \to A[1]
\]
is a distinguished triangle in $\Db X$.

Recall that the Mukai vector of a sheaf, or more generally an object $E \in \Db X$, is defined as
\[
v(E) = (v_0(E), v_1(E), v_2(E)) := \ch(E) \cdot \sqrt{\td(X)} = (r(E), c_1(E), \ch_2(E) + r(E)).
\]
For any classes $v, w \in K(\Db X)$ the Mukai pairing is given by
\[
\langle v, w \rangle := -\chi(v, w) = v_1 \cdot w_1 - v_0 \cdot w_2 - v_2 \cdot w_0.
\]
To define a stability condition we fix another real parameter $\alpha > 0$, and for any $E \in \Coh^{\beta}X$ set
\[
Z_{\alpha, \beta}(E) := \langle \exp(\beta H + i \alpha H), v(E) \rangle.
\]
The corresponding slope function is given by
\[
\nu_{\alpha, \beta}(E) := -\frac{\Re(Z_{\alpha, \beta}(E))}{\Im(Z_{\alpha, \beta}(E))} = \frac{v_2(E) - \beta H \cdot v_1(E) + \frac{\beta^2}{2} H^2 \cdot v_0(E) - \frac{\alpha^2}{2} H^2 \cdot v_0(E)}{H \cdot v_1(E) - \beta H^2 \cdot v_0(E)}.
\]

A class $\delta\in K(\Db X)$ is called \emph{spherical} if $\delta^2 = -2$.
%, and $\delta_0 > 0$.
The following result was first proved in \cite{Bri08:stability_k3} with respect to a slightly different definition of stability condition than the one given in this note.
The free abelian group $\Lambda$ is $K_{\mathrm{num}}(\Db X)$, while $v\colon K(\Db X)\to\Lambda$ is indeed the Mukai vector.
The additional properties on openness and boundedness follow from \cite{Tod08:K3Moduli}.
The support property can be found in \cite[Lemma 8.1]{Bri08:stability_k3}: it is there proved in the form stated in Remark~\ref{rmk:def_stability}(c).

\begin{thm}
\label{thm:stab_k3}
The pair $\sigma_{\alpha, \beta} = (\Coh^{\beta}X, Z_{\alpha, \beta})$ is a Bridgeland stability conditions if for all spherical classes $\delta$ with $\Im Z_{\alpha, \beta}(\delta) = 0$ and $\delta_0>0$, we have $\Re Z_{\alpha, \beta}(\delta) > 0$.
\end{thm}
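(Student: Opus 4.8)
The plan is to invoke the slope reformulation of Remark~\ref{rmk:def_stability}(e). Since $\Coh^{\beta}X$ is already known to be the heart of a bounded t-structure (from the tilting construction recalled above), it suffices to check that $Z_{\alpha,\beta}$ is a stability function on $\Coh^{\beta}X$, that Harder--Narasimhan filtrations exist, and that conditions \ref{eq:supportproperty}--\ref{eq:boundedness} of Definition~\ref{def:Bridgeland} hold. The last three I would simply cite as in the discussion preceding the statement: openness and boundedness from \cite{Tod08:K3Moduli}, and the support property from \cite[Lemma 8.1]{Bri08:stability_k3}. The genuine content is therefore the stability function property, and this is exactly where the hypothesis on spherical classes intervenes.

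First I would verify the weak positivity $\Im Z_{\alpha,\beta}(E)\geq 0$ for every nonzero $E\in\Coh^{\beta}X$. Reading off from the slope formula that $\Im Z_{\alpha,\beta}$ is proportional to $H\cdot v_1(E)-\beta H^2 v_0(E)$, and using that $\Coh^{\beta}X$ is generated by $\TT^{\beta}$ and $\FF^{\beta}[1]$ under extensions, this reduces by additivity to the two defining slope inequalities: an object of $\TT^{\beta}$ (torsion, or torsion-free with all quotient slopes $>\beta$) has nonnegative imaginary part, and so does the shift of an object of $\FF^{\beta}$ (torsion-free with $\mu^{+}\leq\beta$).

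The crux, which I expect to be the main obstacle, is the boundary case: if $\Im Z_{\alpha,\beta}(E)=0$ then I must show $\Re Z_{\alpha,\beta}(E)<0$. Since both $\HH^{0}(E)$ and $\HH^{-1}(E)[1]$ then have vanishing imaginary part, the positivity just proved forces $\HH^{0}(E)$ to be a zero-dimensional torsion sheaf (contributing $\Re Z_{\alpha,\beta}=-\ell\leq 0$, where $\ell$ is its length) and $F:=\HH^{-1}(E)$ to be $\mu$-semistable of slope exactly $\beta$. It thus suffices to show $\Re Z_{\alpha,\beta}(F)>0$ for every nonzero such $F$; and because $Z_{\alpha,\beta}$ is additive and $F$ is an iterated extension of $\mu$-stable sheaves of slope $\beta$, I may further reduce to $F$ itself $\mu$-stable of slope $\beta$. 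A direct computation using the Mukai vector and the relation $H\cdot c_1(F)=\beta H^2\rk(F)$ then shows that $\Re Z_{\alpha,\beta}(F)>0$ is equivalent to the single inequality $\langle v(F),v(F)\rangle>-\alpha^2 H^2\rk(F)^2$. Here I would invoke the K3 dichotomy for a $\mu$-stable sheaf, namely $\langle v(F),v(F)\rangle=\ext^1(F,F)-2\in\{-2,0,2,\dots\}$: if $\langle v(F),v(F)\rangle\geq 0$ the inequality is immediate since the right-hand side is negative, while if $\langle v(F),v(F)\rangle=-2$ then $v(F)$ is a spherical class with $\Im Z_{\alpha,\beta}(v(F))=0$ and positive rank, so the hypothesis delivers precisely $\Re Z_{\alpha,\beta}(v(F))>0$. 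This is the step in which the Bogomolov--Gieseker inequality (through $\ext^1\geq 0$) and the spherical-class hypothesis combine, and it is the heart of the argument.

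Finally, for Harder--Narasimhan filtrations I would rely on the noetherianness of $\Coh^{\beta}X$ together with the support property, which provides the uniform Bogomolov-type bound needed to exclude infinite chains of destabilizing subobjects; this is the mechanism of \cite{Bri08:stability_k3} and, with care in the case of irrational $\beta$, yields existence of the filtrations. Assembling the stability function property, the Harder--Narasimhan filtrations, and the three cited conditions shows that $\sigma_{\alpha,\beta}$ satisfies all of Definition~\ref{def:Bridgeland}, completing the proof.
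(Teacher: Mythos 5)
The paper does not actually prove Theorem~\ref{thm:stab_k3}: it only cites \cite{Bri08:stability_k3} for the construction and the support property and \cite{Tod08:K3Moduli} for openness and boundedness. Your sketch is a correct reconstruction of Bridgeland's original argument from that reference --- the reduction of the boundary case $\Im Z_{\alpha,\beta}(E)=0$ to $\mu$-stable sheaves $F$ of slope $\beta$, the identity $\Re Z_{\alpha,\beta}(F)=\tfrac{\alpha^2}{2}H^2\rk(F)+\tfrac{v(F)^2}{2\rk(F)}$, and the dichotomy $v(F)^2=-2$ versus $v(F)^2\geq 0$ (where the spherical hypothesis enters exactly as you say) is precisely where the content lies, and your treatment of it is sound; the only step genuinely requiring the extra care you already flag is the existence of Harder--Narasimhan filtrations when $\beta$ is irrational.
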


It turns out that the hypothesis of Theorem~\ref{thm:stab_k3} on spherical classes is automatically fulfilled if $\alpha^2 H^2 > 2$. Moreover, stability conditions will also exist outside some ``holes'' in the $(\alpha,\beta)$-plane (these holes exactly correspond to points $(\alpha,\beta)$ where there is a spherical class $\delta$ with $Z_{\alpha, \beta}(\delta)=0$), but their construction is slightly more involved (we refer to \cite[Section 12]{Bri08:stability_k3}; see also \cite[Figure 1]{BB13:autoequivalences_k3} for a picture of the $(\alpha,\beta)$-plane).

The following result gives an intuition for the large volume limit point (see {\cite[Proposition 14.2]{Bri08:stability_k3}} and \cite[Section 6.2]{Tod08:K3Moduli}).

\begin{prop}
\label{prop:large_volume_limit}
Let $E \in \Db X$ have positive rank. Fix a real number $\beta < \mu(E)$. Then $E$ is in $\Coh X$ and Gieseker-(semi)stable if and only if $E$ is in $\Coh^{\beta}X$ and $\sigma_{\alpha, \beta}$-(semi)stable for sufficiently large $\alpha \gg 0$.
\end{prop}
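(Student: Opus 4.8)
The plan is to compare the two slope functions directly by tracking the $\alpha\to\infty$ asymptotics of $\nu_{\alpha,\beta}$, and to control which objects even lie in $\Coh^\beta X$ as $\alpha$ grows. First I would fix the hypothesis $\beta<\mu(E)$ and observe that a positive-rank sheaf $E$ with $\mu(E)>\beta$ has $\HH^0(E)\in\TT^\beta$ and no $\FF^\beta$ part, so $E\in\Coh^\beta X$; conversely a complex in $\Coh^\beta X$ with positive rank and the right numerical type must be (a shift of) such a sheaf once $\alpha$ is large. The key analytic input is the formula for $\nu_{\alpha,\beta}$ in the excerpt: the numerator contains the term $-\tfrac{\alpha^2}{2}H^2\cdot v_0(E)$, so for a positive-rank object one has $\nu_{\alpha,\beta}(E)\to-\infty$ like $-\tfrac{\alpha^2}{2}H^2\,r(E)/(H\cdot v_1(E)-\beta H^2 r(E))$ as $\alpha\to\infty$. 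The denominator $H\cdot v_1(E)-\beta H^2 v_0(E)$ is positive exactly because $\mu(E)>\beta$, so this leading behaviour is well defined and governed entirely by the rank.

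The heart of the argument is then a two-term expansion of the difference of slopes for a subobject $F\hookrightarrow E$ in $\Coh^\beta X$. I would expand $\nu_{\alpha,\beta}(F)-\nu_{\alpha,\beta}(E)$ in powers of $1/\alpha^2$: the leading $\alpha^2$ coefficient compares ordinary slopes $\mu(F)$ versus $\mu(E)$ (equivalently the ranks, after clearing the common positive denominators), and the next, $\alpha$-independent, coefficient compares the normalized $\ch_2$ data, which is precisely the reduced Hilbert polynomial comparison defining Gieseker stability. Thus for $\alpha\gg0$ the sign of $\nu_{\alpha,\beta}(F)-\nu_{\alpha,\beta}(E)$ is dictated first by slope and then, in case of equal slope, by the Gieseker comparison — matching exactly the lexicographic order built into Gieseker (semi)stability. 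This yields the implication in both directions at the level of a single fixed destabilizing subobject.

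The hard part is passing from "each fixed subobject eventually behaves correctly" to a uniform statement valid for a single large $\alpha$ against \emph{all} subobjects at once; a naive argument only gives an $\alpha$ that may depend on the potential destabilizer. Here I would invoke the support property \ref{eq:supportproperty} (equivalently Remark~\ref{rmk:def_stability}(c)) together with the boundedness built into Theorem~\ref{thm:stab_k3}: the classes of $\sigma_{\alpha,\beta}$-semistable subobjects of $E$ with bounded mass lie in a finite set of numerical types, so only finitely many "slopes" can compete, and one can choose a single threshold $\alpha_0$ working simultaneously for all of them. I would also use that Harder--Narasimhan filtrations exist (Definition~\ref{def:Bridgeland}(iv)) to reduce testing stability to semistable subobjects, and the large-volume chamber structure from Remark~\ref{rem:BridgelandDefoThm}(c) to ensure the relevant walls for the class $v(E)$ are locally finite, so that beyond the last wall the set of semistable objects of class $v(E)$ stabilizes. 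Combining the asymptotic slope comparison with this finiteness gives the desired equivalence of Gieseker-(semi)stability and $\sigma_{\alpha,\beta}$-(semi)stability for all sufficiently large $\alpha$.
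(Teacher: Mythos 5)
The paper does not prove Proposition~\ref{prop:large_volume_limit}; it quotes it from \cite[Proposition 14.2]{Bri08:stability_k3} and \cite[Section 6.2]{Tod08:K3Moduli}. Your outline follows the same strategy as those references: expand $\nu_{\alpha,\beta}$ in $\alpha$, observe that the leading ($\alpha^2$) term compares slopes and the constant term compares the $\chi/\rk$ data defining Gieseker stability, and then make the threshold uniform by a finiteness argument. The direction ``$\sigma_{\alpha,\beta}$-semistable for all $\alpha\gg0$ $\Rightarrow$ Gieseker-semistable'' is complete as you describe it: a Gieseker-destabilizing subsheaf of maximal slope lies in $\TT^{\beta}$, its quotient is then automatically in $\TT^{\beta}$, so it is a subobject in the heart and your two-term expansion defeats $E$ for large $\alpha$.

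Two steps are asserted rather than proved, and one would fail as literally stated. First, ``the walls are locally finite, so beyond the last wall the semistable objects stabilize'' is not a valid inference: the nested semicircular walls in the region $\beta<\mu(E)$ could a priori meet a fixed vertical line $\beta=\beta_0$ at arbitrarily large $\alpha$ (their radii need not be bounded), and local finiteness alone does not exclude this. What is actually needed is that only finitely many numerical classes occur as Harder--Narasimhan factors of objects of class $v(E)$ anywhere in that region; this follows from $0\le H\cdot v_1(F)-\beta H^2 v_0(F)\le H\cdot v_1(E)-\beta H^2 v_0(E)$ for a subobject $F$ in the heart, combined with the constraint $v^2\ge -2$ on the stable factors (Lemma~\ref{lem:necessary_k3}, i.e.\ the support property), which bounds the possible pairs $(v_0(F),v_1(F))$. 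Only with this finiteness does Proposition~\ref{prop:structure} produce a largest semicircular wall and hence a genuine Gieseker chamber. Second, the claim that a $\sigma_{\alpha,\beta}$-semistable object of positive rank with $\beta<\mu(E)$ is a sheaf for $\alpha\gg0$ needs the short exact sequence $0\to\HH^{-1}(E)[1]\to E\to\HH^{0}(E)\to0$ in $\Coh^{\beta}X$: if $\HH^{-1}(E)\ne0$ it is a torsion-free sheaf of positive rank in $\FF^{\beta}$, so its shift has negative rank and $\nu_{\alpha,\beta}(\HH^{-1}(E)[1])\to+\infty$ (or is identically $+\infty$) while $\nu_{\alpha,\beta}(E)\to-\infty$, a contradiction; a similar comparison excludes torsion subsheaves. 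Finally, a small but real imprecision: $E\in\TT^{\beta}$ requires $\mu_{\min}(E)>\beta$, not merely $\mu(E)>\beta$; this holds here only because Gieseker-semistable sheaves are slope-semistable.
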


The Mukai vector of a stable object satisfies serious restrictions.

\begin{lem}
\label{lem:necessary_k3}
Let $E$ be a $\sigma_{\alpha, \beta}$-stable or slope-stable object. Then $v(E)^2 \geq -2$.
\end{lem}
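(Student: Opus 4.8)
The plan is to reduce the statement to a single computation of the self-Euler-pairing $\chi(E,E)$ and to exploit that stable objects are simple. First I would recall Mukai's identity on the K3 surface: for any $E \in \Db X$ the Mukai pairing computes the negative Euler characteristic, so that
\[
v(E)^2 = \langle v(E), v(E)\rangle = -\chi(E,E).
\]
This is merely the definition $\langle v, w\rangle = -\chi(v,w)$ packaged with Riemann--Roch into the Mukai vector; no stability input is used at this stage. Hence the claim $v(E)^2 \geq -2$ is equivalent to the inequality $\chi(E,E) \leq 2$.

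Next I would unwind $\chi(E,E)$ using the special geometry of a K3 surface. Since the canonical bundle is trivial, Serre duality gives $\Ext^2(E,E) \cong \Hom(E,E)^\vee$, hence $\ext^2(E,E) = \hom(E,E)$, and therefore
\[
\chi(E,E) = 2\hom(E,E) - \ext^1(E,E).
\]
As $\ext^1(E,E) \geq 0$, it suffices to show $\hom(E,E) = 1$, that is, that $E$ is simple.

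The heart of the argument is thus the simplicity of a stable object, and this is the step I expect to require the most care, since it must be handled uniformly for both notions of stability in the statement. For a slope-stable sheaf this is classical: a nonzero endomorphism is forced to be an isomorphism by comparing the slopes of its image and kernel. For a $\sigma_{\alpha,\beta}$-stable object $E$, by definition (see Remark~\ref{rmk:def_stability}(a)) $E$ is a simple object of the abelian category $\PP(\phi)$, so any nonzero endomorphism in $\PP(\phi)$ is an isomorphism; Schur's lemma then makes $\End(E)$ a division algebra, and since $X$ is projective this algebra is finite-dimensional over the algebraically closed field $\C$, forcing $\End(E) = \C$ and $\hom(E,E) = 1$.

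Combining the three steps yields $\chi(E,E) = 2 - \ext^1(E,E) \leq 2$, and hence $v(E)^2 = -\chi(E,E) \geq -2$, as required. The one subtlety to watch is the passage from ``stable'' to ``simple'' in the tilted heart $\Coh^\beta X$: one should confirm that a destabilizing subobject, were it to exist, would obstruct the existence of a nonzero non-invertible endomorphism, which is precisely the content of being a simple object of $\PP(\phi)$.
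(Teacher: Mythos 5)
Your proof is correct and follows essentially the same route as the paper: Mukai's identity $v(E)^2=-\chi(E,E)$, Serre duality plus simplicity of stable objects to get $\ext^2(E,E)=\hom(E,E)=1$, and then $v(E)^2=\ext^1(E,E)-2\geq -2$. The only point you leave implicit (and which the paper states explicitly) is that $\ext^i(E,E)=0$ for $i<0$ and $i>2$ because $E$ lies in the heart of a bounded t-structure; this vanishing is what justifies your formula $\chi(E,E)=2\hom(E,E)-\ext^1(E,E)$.
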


\begin{proof}
Since $E$ is stable and the canonical bundle $\omega_X$ is trivial, we have $\ext^2(E, E) = \hom(E, E) = 1$. Since $\Coh^{\beta}X$ is the heart of a bounded t-structure $\ext^i(E, E) = \ext^{2-i}(E, E) = 0 $ for $i < 0$ and $i > 2$. Therefore,
\[
v(E)^2 = -\chi(E, E) = \ext^1(E, E) - 2 \geq -2. \qedhere
\]
\end{proof}

The converse to Lemma~\ref{lem:necessary_k3} holds under a genericity condition on the stability condition. It is a much more difficult result and we will explain it further below.
%Stable objects that satisfy $v(E)^2 = -2$ are called \emph{spherical}.
%As we remarked before, making sure that $Z_{\alpha, \beta}$ does not map spherical objects to zero is precisely the reason for the restrictions on $\alpha, \beta$ in Theorem \ref{thm:stab_k3}.

By varying $\alpha$ and $\beta$ stability changes and non-trivial results can often be obtained from understanding this in detail. There is a locally finite wall and chamber structure such that stability does not change within each chamber. More precisely, for linearly independent classes $v, w$ their \emph{numerical wall} is defined as
\[
W(v, w) := \{ (\alpha, \beta) \in \R_{> 0} \times \R : \ \nu_{\alpha, \beta}(v) = \nu_{\alpha, \beta}(w) \}.
\]
Such a numerical wall is called an \emph{actual wall} for $v$ if the set of $\nu_{\alpha, \beta}$-semistable objects with class $v$ are not the same on both sides of $W$ and at $W$ itself. 
A technical remark, which we will slightly overlook in this note, is that walls might have ``holes'' and break, corresponding indeed to spherical classes being mapped to $0$ by $Z_{\alpha,\beta}$. Thus, actual walls might only consist of a subset of a numerical wall (in between two holes). This will happen, for example, in the proof of Theorem~\ref{thm:Bayer}. Lastly, to simplify notation we will write $W(E, F)$ instead of $W(v(E), v(F))$ for $E, F \in \Db X$.

By keeping this last remark in mind, the following result is then an easy computation.

%\footnote{\color{red} Proper reference for the following theorem.}
%{\color{red} Change this...}

\begin{prop}[Structure theorem for walls]
\label{prop:structure}
Let $v \in K(\Db X)$.
\begin{enumerate}
    \item Numerical walls are either semicircles with center on the $\beta$-axis or lines parallel to the $\alpha$-axis. If $v_0 \neq 0$, there is a unique vertical numerical wall at $\beta = \mu(v)$. If $v_0 = 0$ and $v_1 \neq 0$, there is no actual vertical wall.
    \item All numerical semicircular walls with respect to $v$ have their apex along the curve $\nu_{\alpha, \beta}(v) = 0$. This means the following:
    \begin{enumerate}
        \item If $v_0 = 0$ and $v_1 \neq 0$, then all walls are nested semicircles whose apex is along the ray $\beta = \frac{v_2}{v_1}$.
        \item If $v_0 \neq 0$ and $v^2 \geq 0$, then there are two sets of nested semicircles, one on each side of the vertical wall.
        \item If $v^2 < 0$, then all semicircular walls intersect $\nu_{\alpha, \beta}(v) = 0$ in both its apex and in the unique point $(\alpha, \beta)$ for which $Z_{\alpha, \beta}(v) = 0$.
    \end{enumerate}
\end{enumerate}
\end{prop}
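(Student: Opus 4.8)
The plan is to convert the proposition into a single explicit equation for the numerical wall $W(v,w)$ and then read both parts off from its shape. For a class $u$, the imaginary part of the central charge is $\Im Z_{\alpha,\beta}(u) = H\cdot v_1(u) - \beta H^2 v_0(u)$ and minus the real part is $-\Re Z_{\alpha,\beta}(u) = v_2(u) - \beta\,H\cdot v_1(u) + \tfrac{\beta^2 - \alpha^2}{2}H^2 v_0(u)$, so that $\nu_{\alpha,\beta} = -\Re Z/\Im Z$. The condition $\nu_{\alpha,\beta}(v) = \nu_{\alpha,\beta}(w)$ is the vanishing of the cross term $(-\Re Z(v))\,\Im Z(w) - (-\Re Z(w))\,\Im Z(v)$. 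First I would expand this as a polynomial in $\alpha$ and $\beta$. The whole computational content sits in two bookkeeping observations: the cubic term in $\beta$ cancels identically, and the coefficients of $\alpha^2$ and of $\beta^2$ coincide. Writing $A, B, C$ for the three $2\times 2$ minors of the matrix whose rows are $(v_0, H\cdot v_1, v_2)$ for $v$ and for $w$ (taken from the rank/$H\cdot v_1$, rank/$v_2$, and $H\cdot v_1$/$v_2$ column pairs, respectively), the equation collapses to
\[
\tfrac{H^2}{2}\,A\,(\alpha^2+\beta^2) - H^2 B\,\beta + C = 0.
\]

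Part (i) is then immediate. If $A\neq 0$ this is a circle centered at $(0, B/A)$ on the $\beta$-axis, hence a semicircle once we impose $\alpha>0$; if $A=0$ it is the vertical line $\beta = C/(H^2 B)$, parallel to the $\alpha$-axis. A vertical wall forces $A=0$. When $v_0\neq 0$, the relation $A=0$ gives $C = \tfrac{H\cdot v_1(v)}{v_0(v)}B$, so the line sits at $\beta = C/(H^2 B) = \mu(v)$ regardless of $w$; this is the unique vertical wall. When $v_0=0$ but $v_1\neq 0$, the relation $A=0$ forces $v_0(w)=0$ as well, hence $B=0$, and the equation degenerates to $C=0$; but $C\neq 0$ for linearly independent $v,w$, so there is no actual vertical wall.

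For part (ii) the key input is an apex identity. The apex of a semicircular wall ($A\neq 0$) is the point with $\beta = B/A$, where one computes $\beta^2 - \alpha^2 = 2C/(H^2 A)$ from the wall equation. Substituting this into $-\Re Z_{\alpha,\beta}(v)$ produces $\tfrac{1}{A}\bigl(A\,v_2(v) - B\,(H\cdot v_1(v)) + C\,v_0(v)\bigr)$, and the parenthesized expression is the cofactor expansion of a $3\times 3$ determinant with a repeated row, hence zero. Thus $\Re Z_{\alpha,\beta}(v)=0$ at the apex, i.e. the apex lies on the curve $\nu_{\alpha,\beta}(v)=0$. It remains to identify the shape of this curve. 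When $v_0=0$ it is the vertical ray $\beta = v_2(v)/(H\cdot v_1(v))$, so all walls share a common apex-line and are therefore nested (case (a)). When $v_0\neq 0$ it is the conic
\[
(\beta-\mu(v))^2 - \alpha^2 = \frac{v^2}{H^2\,v_0(v)^2},
\]
a hyperbola whose two branches lie on opposite sides of the vertical wall $\beta=\mu(v)$ when $v^2\geq 0$ (case (b)), and which crosses $\beta=\mu(v)$ in the single point with $\alpha^2 = -v^2/(H^2 v_0(v)^2)$ when $v^2<0$ (case (c)). Finally, for case (c) I would observe that at this crossing point both $\Re Z_{\alpha,\beta}(v)$ and $\Im Z_{\alpha,\beta}(v)$ vanish, i.e. $Z_{\alpha,\beta}(v)=0$; the cross-term wall equation is then satisfied trivially for every $w$, so every numerical wall passes through this point in addition to meeting $\nu_{\alpha,\beta}(v)=0$ at its apex.

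I expect the only real obstacle to be organizational rather than conceptual: carrying the three minors with consistent signs through the expansion, and correctly matching the sign of $v^2$ to the geometry of $\{\nu_{\alpha,\beta}(v)=0\}$ while remembering that the restriction $\alpha>0$ is what turns full circles into semicircles and selects the relevant branch of the conic. Once the two determinant identities — equality of the $\alpha^2$ and $\beta^2$ coefficients, and the vanishing of $A\,v_2(v)-B\,(H\cdot v_1(v))+C\,v_0(v)$ — are in place, the geometric classification in both parts follows with no further work.
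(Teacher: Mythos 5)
Your computation is correct and it is exactly the ``easy computation'' that the paper declines to write out (the text before the proposition simply asserts the result follows by direct calculation, so there is no proof in the paper to compare against). Your wall equation $\tfrac{H^2}{2}A(\alpha^2+\beta^2)-H^2B\beta+C=0$ checks out: the $\beta^3$ terms cancel and the $\alpha^2$ and $\beta^2$ coefficients both equal $-\tfrac{H^2}{2}A$, and the apex identity $A\,v_2(v)-B\,(H\cdot v_1(v))+C\,v_0(v)=0$ is indeed a determinant with a repeated row. The identification of $\{\Re Z_{\alpha,\beta}(v)=0\}$ with $(\beta-\mu(v))^2-\alpha^2=v^2/(H^2v_0(v)^2)$ uses $(H\cdot v_1)^2=H^2\,v_1^2$, which is where the standing Picard-rank-one hypothesis enters; worth saying explicitly.

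The only genuine gap is in case (b): knowing that every apex lies on one branch of the hyperbola does not by itself give nestedness (in case (a) the circles are concentric, so there it is immediate). You need one more observation, e.g.: if two walls have apexes $(s_i,r_i)$ on the same branch, then $r_i=\sqrt{(s_i-\mu(v))^2-k}$ with $k\geq 0$, and since $\left|\tfrac{d}{du}\sqrt{u^2-k}\right|\geq 1$ one gets $|r_1-r_2|\geq|s_1-s_2|$, so the circles cannot cross at $\alpha>0$. (Equivalently: through any point with $\alpha>0$ there passes exactly one numerical wall for $v$, because the admissible triples $(A,B,C)$ form a rank-two lattice and passing through a fixed point is one further linear condition.) Either sentence closes the argument; everything else in your writeup, including the degenerate analysis $A=B=0\Rightarrow C\neq 0$ ruling out vertical walls for $v_0=0$, and the observation that $Z_{\alpha,\beta}(v)=0$ makes the cross-term vanish for all $w$ in case (c), is correct as stated.
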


\begin{figure}
    \centering
    \begin{minipage}{0.45\textwidth}
        \begin{tikzpicture}
            \begin{axis}[
                ticks = none,
                axis lines = center,
                xlabel = $\beta$,
                ylabel = $\alpha$,
                xmin = -3.6, xmax = 2.6,
                ymin = 0, ymax = 3,
                axis equal image,
                trig format plots = rad]
                \addplot[domain = 0:3, samples=100, dashed, color = black]({-1/2}, {x});
                \addplot[domain = 0:pi, samples=100, color = black]({5/2*cos(x) - 1/2}, {5/2*sin(x)});
                \addplot[domain = 0:pi, samples=100, color = black]({sqrt(17)/2*cos(x) - 1/2}, {sqrt(17)/2*sin(x)});
                \addplot[domain = 0:pi, samples=100, color = black]({3/2*cos(x) - 1/2}, {3/2*sin(x)});
                \addplot[domain = 0:pi, samples=100, color = black]({sqrt(5)/2*cos(x) - 1/2}, {sqrt(5)/2*sin(x)});
                \addplot[holdot] coordinates{(0, 1)({-5*sqrt(3)/4 - 1/2}, 5/4)({3*sqrt(2)/4 - 1/2}, {3*sqrt(2)/4})};
            \end{axis}
        \end{tikzpicture}
        \caption{Numerical walls for $v_0 = 0$ and $v_1 \neq 0$.}
    \end{minipage}\hfill
    \begin{minipage}{0.45\textwidth}
        \begin{tikzpicture}
            \begin{axis}[
                ticks = none,
                axis lines = center,
                xlabel = $\beta$,
                ylabel = $\alpha$,
                xmin = -9, xmax = 9,
                ymin = 0, ymax = 4.5,
                axis equal image,
                trig format plots = rad]
                \addplot[domain = 0:3, samples=100, dashed, color = black]({-sqrt(8)*cosh(x)}, {sqrt(8)*sinh(x)});
                \addplot[domain = 0:3, samples=100, dashed, color = black]({sqrt(8)*cosh(x)}, {sqrt(8)*sinh(x)});
                \addplot[domain = 0:pi, samples=100, color = black]({cos(x) - 3}, {sin(x)});
                \addplot[domain = 0:pi, samples=100, color = black]({sqrt(17)/2*cos(x) - 7/2}, {sqrt(17)/2*sin(x)});
                \addplot[domain = 0:pi, samples=100, color = black]({7/2*cos(x) - 9/2}, {7/2*sin(x)});
                \addplot[domain = 0:pi, samples=100, color = black]({cos(x) + 3}, {sin(x)});
                \addplot[domain = 0:pi, samples=100, color = black]({7/2*cos(x) + 9/2}, {7/2*sin(x)});
                \addplot[holdot] coordinates{({sqrt(3)/2 - 3}, 1/2)({3 - sqrt(3)/2}, 1/2)};
            \end{axis}
        \end{tikzpicture}
        \caption{Numerical walls for $v_0 \neq 0$ and $v^2 \geq 0$.}
    \end{minipage}\hfill
    \begin{minipage}{0.45\textwidth}
        \begin{tikzpicture}
            \begin{axis}[
                ticks = none,
                axis lines = center,
                xlabel = $\beta$,
                ylabel = $\alpha$,
                xmin = -4, xmax = 4,
                ymin = 0, ymax = 4,
                axis equal image,
                trig format plots = rad]
                \addplot[domain = -3:3, samples=100, dashed, color = black]({sqrt(2)*sinh(x)}, {sqrt(2)*cosh(x)});
                \addplot[domain = 0:pi, samples=100, color = black]({11/6*cos(x) - 7/6}, {11/6*sin(x)});
                \addplot[domain = 0:pi, samples=100, color = black]({11/6*cos(x) + 7/6}, {11/6*sin(x)});
                \addplot[domain = 0:pi, samples=100, color = black]({3/2*cos(x) - 1/2}, {3/2*sin(x)});
                \addplot[domain = 0:pi, samples=100, color = black]({sqrt(2)*cos(x)}, {sqrt(2)*sin(x)});
                \addplot[holdot] coordinates{(0, {sqrt(2)})};
            \end{axis}
        \end{tikzpicture}
        \caption{Numerical walls for $v^2 < 0$.}
    \end{minipage}
\end{figure}

The following result is the key technical ingredient in the proof; the statement as written is contained in \cite[Section 6]{BM14:projectivity} (the version for sheaves is in \cite{Yos01:stable_sheaves_abelian}). We omit the notation $\phi$ from the moduli space $M_{\sigma_{\alpha, \beta}}(v)$.

\begin{thm}[Mukai, O'Grady, Huybrechts, Yoshioka, Toda]
\label{thm:existence_k3}
Let $v$ be a primitive class, and let $\sigma_{\alpha, \beta}$ be a stability condition that does not lie on an actual wall for objects with Mukai vector $v$. Then $M_{\sigma_{\alpha, \beta}}(v)$, the moduli space of $\sigma_{\alpha, \beta}$-stable objects with class $v$, is a smooth projective irreducible holomorphic symplectic variety of dimension $v^2 + 2$. In particular, it is non-empty if and only if $v^2 \geq -2$.
\end{thm}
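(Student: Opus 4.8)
The plan is to separate the local properties (smoothness, dimension, symplectic form), which follow from deformation theory on a K3, from the global properties (projectivity, non-emptiness, irreducibility), and to reduce the latter—especially the hardest parts—to the classical theory of Gieseker-stable sheaves via wall-crossing and autoequivalences.

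First I would pin down the local structure by deformation theory. At a point $[E]\in M_{\sigma_{\alpha,\beta}}(v)$ the Zariski tangent space is $\Ext^1(E,E)$, while obstructions live in the trace-free part of $\Ext^2(E,E)$. Since $E$ is stable it is simple, so $\hom(E,E)=1$, and since $\omega_X$ is trivial, Serre duality gives $\ext^2(E,E)=\hom(E,E)=1$. Hence $\Ext^2(E,E)$ is spanned by the trace class, the reduced obstruction space vanishes, and $M_{\sigma_{\alpha,\beta}}(v)$ is smooth of dimension $\ext^1(E,E)$. The Euler-pairing computation $v^2=-\chi(E,E)=\ext^1(E,E)-2$ (exactly as in Lemma~\ref{lem:necessary_k3}) then yields dimension $v^2+2$. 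For the holomorphic symplectic form I would follow Mukai: the Yoneda composition $\Ext^1(E,E)\times\Ext^1(E,E)\to\Ext^2(E,E)\cong\C$ is antisymmetric and, by Serre duality, non-degenerate; one checks it globalizes to a closed non-degenerate $2$-form.

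For properness and projectivity I would invoke the general machinery already in place. Openness and boundedness, conditions (vi)--(vii) of Definition~\ref{def:Bridgeland}, guarantee that $M_{\sigma_{\alpha,\beta}}(v)$ exists as a proper algebraic space, and the Bayer--Macr\`i class $\ell_{\sigma}$ of Remark~\ref{rmk:def_stability}(d) is strictly nef; the remaining work is to promote strict nefness to ampleness via the positivity lemma, which upgrades the proper algebraic space to a projective variety.

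The hard part will be non-emptiness and irreducibility. The ``only if'' direction of non-emptiness is immediate from Lemma~\ref{lem:necessary_k3}. For the converse, the strategy is to exploit the flexibility of Bridgeland stability: using the wall-and-chamber structure of Proposition~\ref{prop:structure} together with the large-volume description of Proposition~\ref{prop:large_volume_limit}, one moves $\sigma_{\alpha,\beta}$ to a chamber where $M_{\sigma}(v)$ is identified with a Gieseker moduli space of sheaves. Then one applies autoequivalences of $\Db X$—which act on $\Stab(\Db X)$ as in Remark~\ref{rem:BridgelandDefoThm}(b) and preserve the Mukai pairing $v^2$—to transport an arbitrary primitive $v$ with $v^2\geq -2$ to a ``standard'' Mukai vector such as $(1,0,1-n)$ or $(0,0,\ell)$, whose moduli spaces (Hilbert schemes of points, or symmetric-product-type spaces) are classically known by Yoshioka's results to be non-empty and irreducible. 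Crossing back through the walls preserves these properties because $v$ is primitive, so no strictly semistable objects appear off the walls and stability coincides with semistability. The delicate point throughout is controlling exactly how the moduli space changes at each wall and ensuring one never leaves the primitive regime; this is precisely where the full force of the references \cite{Yos01:stable_sheaves_abelian} and \cite{BM14:projectivity} enters.
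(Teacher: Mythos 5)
The paper does not actually prove Theorem~\ref{thm:existence_k3}: it is quoted as a known deep result, with the statement as written attributed to \cite[Section 6]{BM14:projectivity} and the sheaf-theoretic version to \cite{Yos01:stable_sheaves_abelian}. So there is no internal proof to compare against; what you have written is an outline of the proof that lives in those references, and its architecture is the right one. The local analysis (smoothness and dimension from $\hom(E,E)=\ext^2(E,E)=1$ and $v^2=\ext^1(E,E)-2$, Mukai's symplectic pairing) is correct and complete as stated, and the ``only if'' direction of non-emptiness is indeed Lemma~\ref{lem:necessary_k3}.

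Read as a proof rather than a roadmap, however, two of your steps conceal essentially all of the difficulty. First, ``crossing back through the walls preserves these properties because $v$ is primitive'' is not an argument: primitivity of $v$ and genericity of $\sigma$ guarantee that semistable equals stable off the walls, but they do not by themselves show that non-emptiness and irreducibility survive a wall-crossing --- the moduli space can a priori change drastically at a wall. What is actually needed is either Toda's wall-crossing invariance of the counting invariants \cite{Tod08:K3Moduli} or the birationality of the moduli spaces in adjacent chambers established in \cite{BM14:projectivity}, combined with Yoshioka's theorem in the Gieseker chamber reached via Proposition~\ref{prop:large_volume_limit}; your sketch should name this as the load-bearing input rather than fold it into ``because $v$ is primitive.'' Second, ``promote strict nefness to ampleness via the positivity lemma'' is circular: the positivity lemma of \cite{BM14:projectivity} is precisely what produces the strictly nef class $\ell_\sigma$ of Remark~\ref{rmk:def_stability}(d), and strict nefness does not imply ampleness in general. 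Projectivity is deduced in \cite{BM14:projectivity} by a separate global argument that again passes through the identification with a Gieseker moduli space in some chamber and through the birational geometry of the resulting hyperk\"ahler varieties. With those two inputs made explicit and credited, your outline is a faithful summary of the known proof.
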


\subsection{The proof}

We define
\[
v = (0, H, d + 1 - g).
\]
Note that any $F \in V^r_d(|H|)$ satisfies $v(F) = v$.

\begin{lem}
\label{lem:classification_wall_L}
The largest wall for objects with Mukai vector $v$ is given by $W(\OO_X, v)$. Moreover, $F \in V^r_d(|H|)$ if and only if $F$ is a pure sheaf with $v(F) = v$ that is destabilized along $W(\OO_X, v)$ by a short exact sequence
\begin{equation}
\label{eq:destabilizing_sequence_L}
0 \to \OO_X^{\oplus (r + 1)} \to F \to G \to 0,
\end{equation}
where $G$ is stable along $W(\OO_X, v)$ with Mukai vector $(-r - 1, H, d - g - r)$.
\end{lem}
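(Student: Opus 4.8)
The plan is to run the wall-crossing machinery for the class $v=(0,H,d+1-g)$, starting far out in the $(\alpha,\beta)$-plane and moving inward until the first wall. First I would record the numerics. Since $v_0=0$ and $v_1=H\neq 0$, Proposition~\ref{prop:structure} shows that every numerical wall for $v$ is a semicircle centered at $\beta_0=\tfrac{d+1-g}{H^2}\le 0$ on the $\beta$-axis, so the walls are concentric and the largest wall is simply the one of largest radius. I would also record $v(\OO_X)=(1,0,1)$ (which is spherical, $v(\OO_X)^2=-2$), the identity $v(\OO_X^{\oplus(r+1)})+v(G)=v$ for $v(G)=(-r-1,H,d-g-r)$, and the key computation $v(G)^2=2\rho(r,d,g)-2$; in particular $v(G)^2\ge -2$ is exactly the Brill--Noether inequality $\rho\ge 0$, which is what later feeds Theorem~\ref{thm:existence_k3}. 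Finally, a rank-one torsion-free sheaf on the integral curve $C\in|H|$ is slope-stable, hence $\sigma_{\alpha,\beta}$-stable for $\alpha\gg0$, so each $F\in V^r_d(|H|)$ is stable far from the origin and it makes sense to ask for its first wall.

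The technical heart, and the step I expect to be the main obstacle, is showing that this first wall is $W(\OO_X,v)$. For a destabilizing class $w=(a,bH,c)$ with $a>0$ one computes the radius of $W(w,v)$ in the closed form $\rho^2=\beta_0^2+\tfrac{2\,(c-(d+1-g)b)}{H^2\,a}$, and I would maximize this over admissible $w$. Admissibility comes from Lemma~\ref{lem:necessary_k3} applied to the stable Jordan--H\"older factors on the wall, which forces $w^2\ge -2$ and $(v-w)^2\ge -2$, together with the constraint $0\le H\cdot c_1\le H^2$ expressing that $w$ is a genuine subobject in $\Coh^\beta X$. The maximum is attained at $w\propto v(\OO_X)$; the crucial point is that the condition $(v-w)^2\ge -2$ is exactly what kills the a priori larger candidates (for instance $\OO_X(H)$ produces a bigger circle, but its complementary class has square $<-2$). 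The same optimization should show that on $W(\OO_X,v)$ the only stable factors of a semistable object of class $v$ are copies of $\OO_X$ and a single complementary factor of class $v(G)$, so that no further splitting is possible; I would also keep track of the arc on which $\OO_X\in\Coh^\beta X$ (i.e.\ $\beta<0$), to handle the ``holes'' where a spherical class vanishes.

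With the wall identified, the forward direction is geometric. For $F\in V^r_d(|H|)$ we have $\Hom(\OO_X,F)=H^0(F)=\C^{r+1}\neq0$, and since $\nu_{\alpha,\beta}(\OO_X)=\nu_{\alpha,\beta}(F)$ on $W(\OO_X,v)$ with $\OO_X$ stable, a nonzero map $\OO_X\to F$ prevents $F$ from being stable there; thus $F$ is destabilized on this wall. I would then consider the evaluation map $\mathrm{ev}\colon\OO_X^{\oplus(r+1)}=\OO_X\otimes\Hom(\OO_X,F)\to F$ and prove it is injective in $\Coh^\beta X$: a nonzero kernel $K\subset\OO_X^{\oplus(r+1)}$ would have $\Hom(\OO_X,K)\neq0$ mapping to zero in $\Hom(\OO_X,F)$, contradicting that $\mathrm{ev}_*$ is an isomorphism onto $H^0(F)$. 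Its cokernel $G$ then has $v(G)=(-r-1,H,d-g-r)$, and by the wall analysis above $G$ is stable, giving the sequence~\eqref{eq:destabilizing_sequence_L}.

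For the converse, suppose $F$ is a pure sheaf with $v(F)=v$ destabilized by~\eqref{eq:destabilizing_sequence_L} with $G$ stable. Purity together with $v_1=H$, which is primitive so that every member of $|H|$ is integral, forces $F$ to be a rank-one torsion-free sheaf on an integral curve $C\in|H|$ with $\chi(F)=v_2=d+1-g$. Applying $\Hom(\OO_X,-)$ to~\eqref{eq:destabilizing_sequence_L} and using $\Hom(\OO_X,G)=0$, which holds because $\OO_X$ and $G$ are distinct $\sigma$-stable objects of the same phase on the wall so that any nonzero map between them would be an isomorphism, yields $h^0(F)=\hom(\OO_X,F)=r+1$. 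Hence $F\in V^r_d(|H|)$, and the two conditions are equivalent.
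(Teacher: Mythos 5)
Your overall architecture matches the paper's (identify $W(\OO_X,v)$ as the first wall, build the sequence from the evaluation map, and run $\Hom(\OO_X,-)$ for the converse), but two of the load-bearing steps are gapped. First, the identification of the largest wall: you reduce it to maximizing the radius over classes $w$ subject to $w^2\ge -2$ and $(v-w)^2\ge -2$, citing Lemma~\ref{lem:necessary_k3}. That lemma applies only to \emph{stable} objects, so it constrains the individual Jordan--H\"older factors, not an arbitrary semistable subobject of class $w$ nor the complementary class $v-w$ (which is in general a sum of several stable classes); the constraint $(v-w)^2\ge -2$ --- the very one you say kills the larger candidates such as $\OO_X(H)$ --- is therefore not justified as stated, and the optimization itself is never carried out. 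The paper sidesteps all of this with a one-line observation: along the ray $\beta=0$, any subobject in $\Coh^0X$ of an object of class $v$ has $v_1\in\{0,H\}$ by Picard rank one, hence infinite tilt slope, so no actual wall can cross that ray above $\alpha^2=\tfrac{1}{g-1}$, where $W(\OO_X,v)$ meets it. You would need either this argument or the full analysis of all same-phase classes on the wall to make your step rigorous; the same issue infects your assertion that ``by the wall analysis above $G$ is stable,'' for which the paper gives a separate argument (showing any equal-slope stable subobject of $G$ would have to be $\OO_X^{\oplus s}$ and then excluding that via $\Hom(\OO_X,G)=\Hom(G,\OO_X)=0$).

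Second, your proof that the evaluation map $\OO_X^{\oplus(r+1)}\to F$ is injective in $\Coh^{\beta}X$ does not work: you claim a nonzero kernel $K$ would have $\Hom(\OO_X,K)\ne 0$, but a nonzero subobject of $\OO_X^{\oplus(r+1)}$ in the tilted heart can perfectly well have no sections (an ideal sheaf of points $\II_Z\subset\OO_X$ is such a subobject for $\beta<0$, and $H^0(\II_Z)=0$). The correct argument, as in the paper, is a stability argument: one first shows that a stable subobject $A$ of $K$ of maximal slope must satisfy $\nu_{\alpha,\beta}(A)=\nu_{\alpha,\beta}(\OO_X)$ on the wall (otherwise the image of the evaluation map would destabilize $F$ there), and then that a nonzero projection $A\to\OO_X$ between stable objects of the same phase forces $A\cong\OO_X$, which does contradict the injectivity of $\Hom(\OO_X,\OO_X^{\oplus(r+1)})\to\Hom(\OO_X,F)$. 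Your converse direction is fine and agrees with the paper.
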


\begin{proof}
The wall $W(\OO_X, v)$ intersects the ray $\beta = 0$ for $\alpha^2 = \tfrac{1}{g - 1}$. Assume there is a wall for $\beta = 0$ and $\alpha^2 > \tfrac{1}{g - 1}$. However, $v_1 = H$ and by definition of $\Coh^0 X$ any destabilizing subobject $A \into F'$ for some $F'$ with $v(F') = v$ satisfies $v_1(A) \in \{0, H\}$. In either case, the subobject or quotient has infinite slope along this ray, while $F'$ has finite slope. This cannot define a wall. Together with Proposition~\ref{prop:large_volume_limit} this shows that any Gieseker-stable sheaf $F'$ with $v(F') = v$ (not just $F \in V^r_d(|H|)$) is semistable along $W(\OO_X, v)$. With this Mukai vector, Gieseker-stable simply means being a pure sheaf support on a curve $C \in |H|$.

Note that the point $\alpha^2 = \tfrac{1}{g - 1}$, $\beta = 0$ does not correspond to a stability condition due to $Z_{\alpha, \beta}(\OO_X) = 0$. This breaks the wall, and we will study the part with $\beta < 0$ and call it $W$. It is not hard to see, from Proposition~\ref{prop:structure}, that $\OO_X$ is stable along $W$.

Assume that $F \in V^r_d(|H|)$. Then $\hom(\OO_X, F) = r + 1$ and we get a morphism $\OO_X^{\oplus (r + 1)} \to F$. The first claim is that this map is injective in $\Coh^{\beta} X$ for $\beta < 0$ such that there is $\alpha > 0$ with $(\alpha, \beta) \in W$. Let $A \into K \into \OO_X^{\oplus (r + 1)}$ be a stable subobject of the kernel $K$ in $\Coh^{\beta} X$ with $\nu_{\alpha, \beta}(A) \geq \nu_{\alpha, \beta}(K)$ for such $(\alpha, \beta)$. Since $\OO_X^{\oplus (r + 1)}$ is semistable, we get $\nu_{\alpha,\beta}(A) \leq \nu_{\alpha, \beta}(\OO_X)$. However, we must have equality because otherwise the quotient of $K \into \OO_X^{\oplus (r + 1)}$ would make $F$ unstable along $W$. 
%Moreover, $A$ must be semistable along $W$, since any destabilizing subobject would also destabilize $\OO_X^{\oplus (r + 1)}$. Let $A \into K$ be a stable subobject with equal $\nu_{\alpha, \beta}$-slope.
We can choose a quotient $\OO_X^{\oplus (r + 1)} \onto \OO_X$ such that $A \to \OO_X$ is non-trivial. Since $\OO_X$ is stable, this is a contradiction. Therefore, such an $A$ does not exist and in conclusion the kernel is trivial, i.e., the map is injective in $\Coh^{\beta} X$.

We define $G$ to be the quotient
\[
0 \to \OO_X^{\oplus (r + 1)} \to F \to G \to 0.
\]
We have to show that $G$ is stable along $W$. Being the quotient of two semistable objects with the same slope, it is certainly semistable. Assume $G$ has a stable subobject $A$ with the same slope along $W$. Then by definition of $\Coh^{\beta} X$ we get
\[
0 \leq \frac{v_1(A)H}{H^2} - \beta v_0(A) \leq 1 + \beta (r + 1)
\]
for $(\alpha, \beta) \in W$. Taking the limit $\beta \to 0$, we get $v_1(A) \in \{0, H\}$. By exchanging $A$ with the quotient $G/A$ if necessary, we can assume that $v_1(A) = 0$. By continuity and linearity of $Z_{\alpha, \beta}$ in $(\alpha, \beta)$ the complex numbers $Z_{\alpha, 0}(A)$ and $Z_{\alpha, 0}(G)$ have to be linearly dependent for $\alpha^2 = \tfrac{1}{g - 1}$. Since $Z_{\alpha, 0}(A)$ has infinite slope and $Z_{\alpha, 0}(G)$ has finite slope, this is only possible if $Z_{\alpha, 0}(A) = 0$ for $\alpha^2 = \tfrac{1}{g - 1}$. A straightforward computation shows $v_2(A) = v_0(A)$. Therefore, it is not too hard to see that $A = \OO_X^{\oplus v_0(A)}$.
%\footnote{\color{red} Reference? Maybe add a Lemma to prelims on stability.}
We are done, if we can show that $\Hom(\OO_X, G) = \Hom(G, \OO_X) = 0$. The long exact sequence from applying $\Hom(\OO_X, \cdot )$ to (\ref{eq:destabilizing_sequence_L}) implies $\Hom(\OO_X, G) = 0$. If there was a non-trivial morphism $G \to \OO_X$, then there would be a non-trivial morphism $F \to \OO_X$. But that would imply that $F$ is unstable above $W$, a contradiction.

Assume vice-versa that $F'$ is a pure sheaf supported on a curve $C$ with $v(F') = v$ that is destabilized by a short exact sequence as in (\ref{eq:destabilizing_sequence_L}). The long exact sequence from applying $\Hom(\OO_X, \cdot )$ to (\ref{eq:destabilizing_sequence_L}) implies $h^0(F') = r + 1 + h^0(G) = r + 1$, since stability of $G$ along the wall implies $\hom(\OO_X, G) = 0$.
\end{proof}

% \begin{cor}
% The set $V^r_d(|H|)$ is non-empty if and only if $\rho(r, d, g) \geq 0$. Moreover in that case, there is a morphism from $V^r_d(|H|) \to M^{\operatorname{stable}}_{\sigma}(-r - 1, H, d - g + 1)$, the open locus of stable objects in $M_{\sigma}(-r - 1, H, d - g + 1)$, for $\sigma$ along $W(\OO_X, v)$. For each $G \in M_{\sigma}(-r - 1, H, d - g + 1)$, the fiber of this morphism is the Grassmannian of $r + 1$-dimensional quotients of $\Ext^1(G, \OO_X)^{\vee}$.
% \end{cor}

\begin{proof}[Proof of Theorem~\ref{thm:Bayer}]
Assume $\rho(r, d, g) < 0$ and $V^r_d(|H|) \neq 0$. By Lemma~\ref{lem:classification_wall_L}, there is a stable object $G \in M_{\sigma}(-r - 1, H, d - g - r)$ for $\sigma$ along $W(\OO_X, v)$. By Lemma~\ref{lem:necessary_k3} this implies
\[
-2 \leq (-r - 1, H, d - g - r)^2 = 2\rho(r, d, g) - 2 < -2,
\]
a contradiction.

Assume $\rho(r, d, g) \geq 0$. We will show non-emptiness and describe the structure of $V^r_d(|H|)$ at the same time. Let $F \in V^r_d(|H|)$. Then by Lemma~\ref{lem:classification_wall_L} the sequence (\ref{eq:destabilizing_sequence_L}) is the Harder--Narasimhan filtration of $F$ below the wall. In particular, the object $G$ is uniquely determined for fixed $F$. A morphism $V^r_d(|H|) \to M^{\operatorname{stable}}_{\sigma}(-r - 1, H, d - g - r)$ for $\sigma$ along $W(\OO_X, v)$ is defined by $F \mapsto G$.

Let $\sigma'$ be a point above $W$ in a sufficiently small enough neighborhood. By Theorem~\ref{thm:existence_k3} the moduli space $M_{\sigma'}(-r - 1, H, d - g - r)$ is a smooth projective irreducible holomorphic symplectic variety of dimension $(-r - 1, H, d - g - r)^2 + 2 = 2\rho(r, d, g)$. Since stability is an open property, the locus of stable objects $M^{\operatorname{stable}}_{\sigma}(-r - 1, H, d - g - r)$ is an open subset of $M_{\sigma'}(-r - 1, H, d - g - r)$, hence is also smooth, irreducible, and has the same dimension.

Since both $\OO_X$ and $G$ are stable along $W$ with the same slope, we have $\Hom(G, \OO_X) = 0$ and $\Ext^2(G, \OO_X) = \Hom(\OO_X, G) = 0$. Thus, $\ext^1(G, \OO_X) = \langle G, \OO_X \rangle  = g - d + 2r + 1 > r + 1$. Let $\Ext^1(G, \OO_X)^{\vee} \onto V$ be an $r + 1$-dimensional quotient. Then there is a natural extension
\[
0 \to \OO_X \otimes V \to F' \to G \to 0.
\]
We claim that $F'$ is stable above $W$. Any destabilizing subobject $A$ of $F'$ above $W$ has to be semistable along $W$ with the same slope as $F'$. Since Jordan-H\"older filtrations have unique factors up to order, there are two possibilities. If $A$ does not contain $G$ as stable factor, then $A = \OO_X^{\oplus s}$. However, this simply does not destabilize $F'$ numerically above the wall. If $A$ does contain $G$ as a stable factor, then we have a quotient $F' \onto \OO_X$. Applying $\Hom(\cdot, \OO_X)$ to the defining sequence for $F'$ leads to the long exact sequence
\[
0 \to \Hom(F', \OO_X) \to \Hom(\OO_X, \OO_X \otimes V) \to \Ext^1(G, \OO_X)
\]
By construction the last map is injective with image $V^{\vee}$. Thus, $\Hom(F', \OO_X) = 0$.
\end{proof}

\begin{rmk}
The same approach works for arbitrary $v$ if we consider objects that are stable near the wall $W(\OO_X, v)$ (\cite{Fey17:Mukai_program}). To extend Brill--Noether statements to stable sheaves on curves of higher rank, one first needs to control their Harder--Narasimhan filtration near $W(\OO_X, v)$. The idea is then to prove a general bound on the dimension of the space of global sections purely in terms of the shape of the Harder--Narasimhan filtration near $W(\OO_X, v)$.
\end{rmk}

%%%%%%%%%%%%%%%%%%%%%%%%%%%%%%%%%%%%%%%%%%%%%
%%%%%%%%%%%%%%%%%%%%%%%%%%%%%%%%%%%%%%%%%%%%%
%%%%%%%%%%%%%%%%%%%%%%%%%%%%%%%%%%%%%%%%%%%%%

\section{The genus of space curves}
\label{sec:genus}

A classical subject in algebraic geometry is the attempt to classify space curves. As usual there are two steps to this. First, understand their discrete invariants. Second, study their moduli spaces, i.e., Hilbert schemes of curves.

Hilbert scheme of curves in $\P^3$ are notoriously badly behaved. For example, in \cite{Mum62:pathologies} Mumford constructs an open subset of an irreducible component parametrizing smooth curves that is non-reduced everywhere. How to handle problems such as these remains a big open question. Instead, we would like to concentrate on discrete invariants. This study goes all the way back to Noether, Halphen \cite{Hal82:genus_space_curves}, and Castelnuovo \cite{Cas37:inequality} (see \cite[IV, \S 6]{Har77:algebraic_geometry}).

\begin{thm}[Halphen, Gruson--Peskine]
Let $C \subset \P^3$ be a smooth curve of degree $d$ and genus $g$.
\begin{enumerate}
    \item If $C$ is contained in a plane, then $g = \tfrac{(d - 1)(d - 2)}{2}$.
    \item If $C$ is not contained in a plane, then $g \leq \tfrac{d^2}{4} - d + 1$.
    \item If $C$ is not contained in a plane or a quadric, then $g \leq \tfrac{d^2}{6} - \tfrac{d}{2} + 1$.
    \item There are smooth curves $C \subset \P^3$ of degree $d$ and genus $g$ whenever $0 \leq g \leq \tfrac{d^2}{6} - \tfrac{d}{2} + 1$.
\end{enumerate}
\end{thm}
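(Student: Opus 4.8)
The plan is to prove this classical theorem about space curves using the same wall-crossing philosophy that drove the Brill--Noether argument, but now applied to the \emph{ideal sheaf} $\mathcal{I}_C$ of the curve $C \subset \P^3$ inside a weak Bridgeland stability condition on $\Db(\P^3)$. The Mukai-vector data is replaced by the Chern character $\ch(\mathcal{I}_C) = (1, 0, -d, -\ldots)$ encoding degree $d$ and genus $g$ (through $\ch_3$, via Riemann--Roch). The strategy, following \cite{MS18:space_curves}, is: start at the large-volume limit, where $\mathcal{I}_C$ is slope-stable as a rank-one torsion-free sheaf, and then track what happens as the stability parameter $(\alpha,\beta)$ is pushed toward the wall-and-chamber boundary. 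Each of the bounds (i)--(iv) should correspond to a different geometric regime — plane, quadric, generic — which in the stability picture means a different \emph{first wall} where $\mathcal{I}_C$ is destabilized. The numerical position of that wall translates directly into an inequality relating $d$ and $g$.

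The key steps, in order, are as follows. First, set up the weak stability conditions $\sigma_{\alpha,\beta}$ on $\Db(\P^3)$ together with the tilted heart and the slope $\nu_{\alpha,\beta}$, exactly parallel to the K3 surface construction but using the threefold analogue of the central charge (a cubic rather than quadratic expression, involving $\ch_3$). Second, establish the structure theorem for walls in this setting: numerical walls are again nested semicircles, and for a rank-one class the relevant destabilizing subobjects are controlled. Third, and most importantly, one shows that if $C$ is \emph{not} contained in a surface of low degree, then $\mathcal{I}_C$ remains $\nu_{\alpha,\beta}$-semistable down to a large value of the radius, forcing the apex of any destabilizing wall to be small; conversely, a large genus produces a large $\ch_3$, which forces a wall of large radius. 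Balancing these two facts yields the inequality. For bound (i) the curve lies on a plane and the destabilizer is $\mathcal{O}(-1)$; for bound (ii) it lies on a quadric and the destabilizer involves $\mathcal{O}(-2)$; for bound (iii), under the genericity assumption, the first wall is pushed further out. Finally, for the sharpness statement (iv), one reverses the construction: given $0 \le g \le \tfrac{d^2}{6} - \tfrac{d}{2} + 1$, one produces an actual stable object with the prescribed invariants by extending appropriate line bundles or twisted ideal sheaves across the relevant wall, and checks that it is the ideal sheaf of an honest smooth curve.

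\textbf{The main obstacle} I expect is the translation between the purely numerical wall data and the genuine geometry of the curve — specifically, controlling the cohomology sheaves of a destabilizing subobject and proving it has the form dictated by the containment of $C$ in a surface of a given degree. In the Brill--Noether proof this was handled by Lemma~\ref{lem:classification_wall_L}, where the destabilizing sequence was pinned down to $\mathcal{O}_X^{\oplus(r+1)} \to F \to G$; here the analogous classification is harder because on a threefold the relevant subobjects are themselves complexes, and ruling out spurious destabilizers requires a careful analysis of $\ch_{\le 2}$ together with the Bogomolov-type inequality for tilt-stable objects. A secondary difficulty is that the bound on $\ch_3$ — which is where the genus enters — comes from a \emph{higher} Bogomolov inequality (a conjectural-type inequality for threefolds, now a theorem for $\P^3$), and invoking it correctly at the wall is the crux of extracting the sharp coefficient $\tfrac{1}{4}$, $\tfrac{1}{6}$ in the stated bounds. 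Making the genericity hypothesis in (iii) interact cleanly with the wall structure, rather than just the numerics, is the part I anticipate requiring the most care.
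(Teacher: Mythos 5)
Your wall-crossing plan is the right one for parts (ii) and (iii), and it is essentially what the paper does --- not for this statement directly (which is quoted as a classical result with references to Gruson--Peskine), but for its generalization, Theorem~\ref{thm:GP}, in the simplified form proved at the end of Section~\ref{sec:genus}: for a smooth curve not contained in a surface of degree $l<k$ and $d\geq k^2$, one obtains $\ch_3(\II_C)=2d+g-1\leq \tfrac{d^2}{2k}+\tfrac{dk}{2}$, which for $k=2$ and $k=3$ is exactly (ii) and (iii). The mechanics match your outline: Theorem~\ref{thm:generalized_bogomolov} forces any wall for $\II_C$ to have large radius when $g$ is large, Proposition~\ref{prop:max_rank} restricts the destabilizing subobject to rank one, Lemma~\ref{lem:destabilized_by_line_bundle} upgrades it to a line bundle $\OO(-h)$ --- necessarily with $h\geq k$, which is where the hypothesis ``not contained in a surface of degree $<k$'' enters --- and Proposition~\ref{prop:rank_zero} applied to the quotient gives $2d+g-1\leq \tfrac{d^2}{2h}+\tfrac{dh}{2}$. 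Two points you should make explicit: the destabilizer is $\OO(-h)$ for \emph{some} $h\geq k$, not specifically $\OO(-1)$ or $\OO(-2)$, so you need the monotonicity of $\tfrac{d^2}{2k}+\tfrac{dk}{2}$ in $k$ (Lemma~\ref{lem:decreasing}) together with a separate argument when $h^2>d$; and the whole argument only covers $d$ large relative to $k$, so the low-degree cases of (ii) and (iii) must be handled by elementary classical means.

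The genuine gaps are in (i) and (iv). Part (i) is an \emph{equality} --- the genus formula for smooth plane curves --- which follows from adjunction on $\P^2$; no wall-crossing inequality will produce an equality for $g$, and describing $\OO(-1)$ as ``the destabilizer for bound (i)'' misreads the statement. More seriously, your plan for (iv), producing a stable object with prescribed Chern character by extensions across a wall and then checking that it is the ideal sheaf of a smooth curve, does not go through on a threefold. The nonemptiness result you would need is the K3 statement, Theorem~\ref{thm:existence_k3}, which rests on the Mukai pairing and the holomorphic symplectic structure of the moduli space; no analogue is known for tilt- or Bridgeland-stable objects on $\P^3$, and even granting a stable complex with the right invariants, stability cannot detect smoothness of the underlying curve. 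The existence part is classical: one realizes every genus in the range $0\leq g\leq \tfrac{d^2}{6}-\tfrac{d}{2}+1$ by curves on quadric and cubic surfaces, which is what the paper means by ``it is left to understand curves on quadrics, but this is elementary''.
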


This theorem was stated by Halphen (\cite{Hal82:genus_space_curves}), but the proof was incomplete; it was finally proved by Gruson and Peskine over one hundred years later in \cite{GP78:genre_courbesI, GP82:postulation_courbes_gauches}. In order to understand for which $d$ and $g$ there are smooth curves, it is left to understand curves on quadrics, but this is elementary. We refer to \cite{Har87:space_curvesII} for more details on all of this.

This theorem begs an immediate follow up question. Fix a positive integer $k$. What happens if $C$ is not contained in a surface of degree $l$ for any $l < k$. When $d > k(k-1)$ this was also solved by Gruson and Peskine in \cite{GP78:genre_courbesI} %\cite{GP82:postulation_courbes_gauches}
and Harris (\cite{Har80:joe_space_curves}). We gave a completely new proof using stability conditions in the derived category in \cite{MS18:space_curves}.

\begin{thm}[Gruson--Peskine, Harris]
\label{thm:GP}
Let $C \subset \P^3$ be a smooth curve of degree $d$ and genus $g$ that is not contained in a surface of degree $l < k$. If $d > k(k-1)$, then
\[
2d + g - 1 \leq \frac{d^2}{2k} + \frac{dk}{2} - \varepsilon(d, k),
\]
where
\[
\varepsilon(d, k) = \frac{f}{2}\left(k - f - 1 + \frac{f}{k}\right).
\]
\end{thm}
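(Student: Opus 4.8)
The plan is to reformulate the inequality as an upper bound on the third Chern character of the ideal sheaf and to extract it from wall-crossing in tilt stability on $\P^3$. First I would record that for a smooth curve $C\subset\P^3$ of degree $d$ and genus $g$ one has $\ch(I_C)=(1,0,-d,\,2d+g-1)$ (normalizing $H^3=1$), so that the left-hand side $2d+g-1$ is \emph{exactly} $\ch_3(I_C)$. The whole statement thus becomes a bound on $\ch_3$ of a rank-one torsion-free sheaf with fixed $\ch_{\le 2}=(1,0,-d)$. Since $I_C$ is slope-stable, the large volume limit for tilt stability on $\P^3$ shows it is $\nu_{\alpha,\beta}$-tilt-stable for $\alpha\gg 0$; this is the starting point, and I would then decrease $\alpha$ and control the first wall at which $I_C$ is destabilized.

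The next step is to translate the hypothesis. The condition that $C$ lies on no surface of degree $l<k$ is precisely $\Hom(\OO_{\P^3}(-l),I_C)=H^0(I_C(l))=0$ for all $l<k$. I would use this to rule out every line bundle $\OO_{\P^3}(-l)$ with $l<k$ as a destabilizing subobject in $\Coh^{\beta}\P^3$, forcing the largest actual wall for $I_C$ to be $W(\OO_{\P^3}(-k),I_C)$. Along this wall the destabilizing sequence should have the shape $0\to\OO_{\P^3}(-k)\to I_C\to Q\to 0$, where $Q$ is tilt-semistable of rank $0$ with $\ch_1(Q)=kH$, i.e. a torsion sheaf scheme-theoretically supported on a degree-$k$ surface through $C$. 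Here the role of the hypothesis $d>k(k-1)$ is to guarantee that this numerical semicircle has its apex in the region $\beta<-k$, where $\OO_{\P^3}(-k)$ and $I_C$ both lie in the tilted heart, so that the wall is genuine there and not merely a numerical artifact broken by a hole.

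Given the sequence, $\ch_3(I_C)=\ch_3(\OO_{\P^3}(-k))+\ch_3(Q)$, so the task reduces to bounding $\ch_3(Q)$. The naive tool is the generalized Bogomolov--Gieseker inequality for $\P^3$, but applying it to $I_C$ along $W(\OO_{\P^3}(-k),I_C)$ (its value is constant along the wall) yields only the weaker estimate $\ch_3(I_C)\le \tfrac{2d^2}{3k}+\tfrac{dk}{3}$, which is not sharp. The correct target is dictated by the extremal geometry: for a complete intersection $C=S_k\cap S_{d/k}$ adjunction gives $2d+g-1=\tfrac{d^2}{2k}+\tfrac{dk}{2}$ on the nose. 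So the genuine content is an object-specific sharp bound on $\ch_3$ of a tilt-semistable torsion sheaf supported on a degree-$k$ surface, which I would obtain by analyzing \emph{its own} destabilizing factors — ultimately line bundles on the surface, which saturate the inequality — and optimizing over the admissible configurations, producing the continuous estimate $\ch_3(I_C)\le\tfrac{d^2}{2k}+\tfrac{dk}{2}$.

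Finally, the term $\varepsilon(d,k)$ is an integrality refinement: writing $d=qk+f$ with $0\le f<k$, the Chern characters of the destabilizing factors are integral, so the continuous optimum is in general not attained, and carrying out the optimization over integral configurations sharpens the bound by exactly $\varepsilon(d,k)=\tfrac{f}{2}\big(k-f-1+\tfrac{f}{k}\big)$, which vanishes precisely when $k\mid d$, the case where the extremal complete intersection exists. The hard part is this sharp step — proving that a tilt-semistable torsion sheaf on a degree-$k$ surface cannot exceed the complete-intersection value, since the plain generalized Bogomolov--Gieseker inequality is far from tight for such quotients. Establishing that $W(\OO_{\P^3}(-k),I_C)$ is indeed the first wall, and handling the holes that break it, are the secondary technical obstacles.
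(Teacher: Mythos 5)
Your overall strategy matches the paper's: identify $2d+g-1$ with $\ch_3(\II_C)$, start at the large volume limit, use the hypothesis to force any destabilizing line bundle to be $\OO(-h)$ with $h\geq k$, and reduce to bounding $\ch_3$ of the rank-zero quotient. But there is a genuine gap at exactly the step you flag as ``the hard part'': you do not say how to prove the sharp bound on $\ch_3$ of the quotient, and the route you sketch (decomposing $Q$ further into line bundles on the surface and ``optimizing over admissible configurations'') is not what makes the argument work and is not obviously carriable-out. The paper's mechanism is different and is the real point of the proof: the generalized Bogomolov--Gieseker inequality $Q_{\alpha,\beta}\geq 0$ of Theorem~\ref{thm:generalized_bogomolov}, while too weak when applied to $\II_C$ itself, is \emph{exactly} sharp for rank-zero objects. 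Proposition~\ref{prop:rank_zero} shows that a tilt-semistable object with $\ch=(0,c,d',e')$ satisfies $e'\leq \tfrac{c^3}{24}+\tfrac{(d')^2}{2c}$; its proof is that such an object admits no wall of radius greater than $c/2$ (via the rank bound of Proposition~\ref{prop:max_rank}, with no classification of Jordan--H\"older factors needed), so one may deform $(\alpha,\beta)$ into the disc $\alpha^2+(\beta-d'/c)^2\leq c^2/4$, where $Q_{\alpha,\beta}\geq 0$ literally \emph{is} the claimed inequality. Applied to $Q=\II_C/\OO(-h)$, whose Chern character is $(0,h,-d-\tfrac{h^2}{2},e+\tfrac{h^3}{6})$, this yields precisely $e\leq \tfrac{d^2}{2h}+\tfrac{dh}{2}$, the complete-intersection value. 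Without this (or an equivalent) input your plan does not close.

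Two smaller points. First, you assert that the largest actual wall is $W(\OO(-k),\II_C)$; this needs proof and is not how the paper argues. The paper proceeds by contradiction: if $e$ exceeds the bound, then $Q_{\alpha,\beta}(\II_C)\geq 0$ forces every actual wall to have radius squared larger than $\Delta(\II_C)/8$, whence by Proposition~\ref{prop:max_rank} the destabilizer has rank one, by Lemma~\ref{lem:destabilized_by_line_bundle} (using integrality of $C$) it is a line bundle $\OO(-h)$, and the hypothesis only gives $h\geq k$; one then needs the monotonicity of $\tfrac{d^2}{2h}+\tfrac{dh}{2}$ in $h$ (Lemma~\ref{lem:decreasing}) in the range $h\leq\sqrt d$, together with a separate argument excluding $h>\sqrt d$ (either the wall is empty in the heart, or it lies inside the forbidden region $Q_{\alpha,\beta}(\II_C)<0$). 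Second, the error term: your description of $\varepsilon(d,k)$ as an ``integrality refinement'' obtained by ``optimizing over integral configurations'' is only a heuristic. The paper itself proves only the statement without $\varepsilon(d,k)$, under the slightly stronger hypothesis $d\geq k^2$, and refers to \cite{MS18:space_curves} for the full bound; obtaining $\varepsilon(d,k)$ requires a genuinely finer wall analysis, not just rounding the continuous optimum.
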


In \cite{MS18:space_curves} we prove a more general statement that includes a very similar bound for principally polarized abelian threefolds of Picard rank one (and holds for integral curves as well). Note how the left hand side of the inequality is given by $2d + g - 1 = \ch_3(\II_C)$. In fact, the analogous theorem on other threefolds bounds $\tfrac{\ch_3(\II_C)}{H^3}$, where $H$ is an ample divisor generating the Picard group.

What happens in the case $d\leq k(k-1)$ is still open. There is a precise conjectural bound (and examples of curves satisfying this bound) in the case $\frac 13 (k^2+4k+6)\leq d \leq k(k-1)$, due to Hartshorne and Hirschowitz (\cite{HH88:genus_bound}), but with the exception of a few cases (see e.g., \cite{GP82:postulation_courbes_gauches,Har88:stable_reflexive_3,Ell91:genre_maximal}), this is still open. In the case $d<\frac 13 (k^2+4k+6)$, a bound is known (an easy consequence of the Clifford Theorem), but it is not yet known if this bound is achieved in all cases (results towards this are in \cite{BBEMR97:maximum_genus_rangeA,BLS18:max_genus}). 

The original proofs of Theorem~\ref{thm:GP} are based on general position results. For instance, on the generalized trisecant lemma (\cite{Lau78:trisecant_lemma}) that roughly says: for each curve of degree $d$ that is not contained in a surface of degree $<k$, with $k(k-1) < d$, there is a hyperplane section which in the corresponding plane is not contained in a curve of degree $<k$.

Our approach is different and based on an idea of Mumford to prove the Kodaira vanishing theorem for surfaces by using the Bogomolov inequality (see the appendix of \cite{Rei78:bogomolov}). Stability conditions on $\Db \P^3$ are based as well on a Bogomolov-type inequality (see Theorem~\ref{thm:generalized_bogomolov}). The basic idea is to run the same argument as in the proof of the Kodaira vanishing theorem, which unfortunately in this case involves quite messy computations. Most of the difficulty though in \cite{MS18:space_curves} comes from the error term $\varepsilon(d, k)$. We will give a proof without this error term and the slightly weaker condition $d \geq k^2$. This makes the argument much simpler, but most of the techniques of the precise statement are already present.

\subsection{Tilt stability}

The definition of Bridgeland stability on K3 surfaces is easy to generalize to other surfaces as pointed out in \cite{AB13:k_trivial}. It is enough to simply replace the Mukai vector with the Chern character in all definitions. In higher dimensions this will not lead to a Bridgeland stability condition, but nonetheless a weaker notion of stability is still well-defined as pointed out in \cite{BMT14:stability_threefolds}. We will explain the differences.

To simplify notation, we define the twisted Chern character $\ch^{\beta}(E) := \ch(E) \cdot e^{-\beta H}$ for any $E \in \Db X$. Note that if $\beta \in \Z$ this is nothing but $\ch(E \otimes \OO_X(-\beta H))$. Furthermore, we identify $\ch_i(E)$ with $H^{3 - i} \cdot \ch_i(E)$ for any $E \in \Db \P^3$. In particular, multiplication is happening as numbers, not as cohomology classes.

The category $\Coh^{\beta}\P^3$ is defined in exactly the same manner as on K3 surfaces. The slope function is now given as
\[
\nu_{\alpha, \beta} := \frac{\ch_2^{\beta} - \frac{\alpha^2}{2} \ch_0^{\beta}}{\ch_1^{\beta}}.
\]
The sole reason that this does not lead to a Bridgeland stability condition is the fact that both denominator and numerator are zero in the case of sheaves supported in dimension zero.

In order to get a grip on the invariants of stable objects, we use the classical Bogomolov inequality (see \cite{Bog78:inequality}) in the following version ({\cite[Corollary 7.3.2]{BMT14:stability_threefolds}}):

\begin{thm}
\label{thm:bogomolov}
If $E \in \Coh^{\beta}\P^3$ is $\nu_{\alpha, \beta}$-semistable, then
\[
\Delta(E) := \ch_1(E)^2 - 2\ch_0(E)\ch_2(E) \geq 0. 
\]
\end{thm}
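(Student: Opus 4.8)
The plan is to deduce this tilt-stability statement from the classical Bogomolov inequality for slope-semistable sheaves, propagating it across the $(\alpha,\beta)$-plane by wall-crossing. The starting point is that the discriminant $\Delta$ does not depend on $(\alpha,\beta)$: a direct computation with $\ch^{\beta}=\ch\cdot e^{-\beta H}$ gives $\ch_1(E)^2-2\ch_0(E)\ch_2(E)=(\ch_1^{\beta})^2-2\ch_0^{\beta}\ch_2^{\beta}$, so $\Delta$ is a genuine numerical invariant insensitive to the twist. We are therefore free to verify $\Delta(E)\ge 0$ at whichever point of the plane is most convenient for the given $E$.

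The base case is the large volume limit. For fixed $\beta$ and $\alpha\gg 0$, a $\nu_{\alpha,\beta}$-semistable object is, up to shift, a slope-semistable sheaf (the tilt analogue of Proposition~\ref{prop:large_volume_limit}). Torsion-free slope-semistable sheaves satisfy $\Delta\ge 0$ by Bogomolov's theorem \cite{Bog78:inequality} (reducing on $\P^3$ to a general surface section, where the surface inequality applies), while for a torsion sheaf one has $\ch_0=0$, hence $\Delta=\ch_1^2\ge 0$ automatically. This settles every object that remains semistable for arbitrarily large $\alpha$.

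To reach an arbitrary point $(\alpha_0,\beta_0)$, I would raise $\alpha$ along the ray $\beta=\beta_0$; by local finiteness of walls only finitely many are met before the large volume regime, so it is enough to cross one wall at a time. At the first wall above $(\alpha_0,\beta_0)$ the object fits in a short exact sequence $0\to A\to E\to B\to 0$ in $\Coh^{\beta_0}\P^3$, with $A$ and $B$ semistable just above the wall and sharing the $\nu_{\alpha,\beta}$-slope of $E$; by induction we may assume $\Delta(A),\Delta(B)\ge 0$. The key point is the superadditivity $\Delta(E)\ge\Delta(A)+\Delta(B)$. Since $v(A)$ and $v(B)$ have the same $Z_{\alpha,\beta}$-phase, together with $\ker Z_{\alpha,\beta}$ they span a plane $P$ on which $\Delta$ has signature $(1,1)$: indeed $\Delta$ has signature $(2,1)$ on $\Lambda_{\R}$ and $\Delta$ is negative on $\ker Z_{\alpha,\beta}$, where a short computation gives the value $-\alpha^2$. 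On such a plane $\{\Delta\ge 0\}$ is a union of two opposite wedges separated by the line $\ch_1^{\beta}=0$, and the common positivity $\ch_1^{\beta}>0$ forces $v(A)$ and $v(B)$ into the same wedge; the resulting non-negative cross term yields the inequality, and thus $\Delta(E)\ge 0$.

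The main obstacle is organizing this so that the induction actually closes. The superadditivity mechanism is clean once the signature-$(1,1)$ picture on $P$ is in place, but one must confirm that the positivity of $\ch_1^{\beta}$ genuinely separates the two wedges, so that $v(A)$ and $v(B)$ cannot sit on opposite sides where the cross term would change sign. For termination I would appeal to local finiteness of walls together with boundedness of semistable objects of a fixed class, which keep the tree of destabilizations finite and guarantee that the reduction to the large volume base case terminates; the same superadditivity, applied to a Jordan--H\"older filtration, also lets one reduce from semistable to stable objects at the outset.
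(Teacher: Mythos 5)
The paper does not actually prove Theorem~\ref{thm:bogomolov}; it is quoted as a known result, with references to \cite{Bog78:inequality} for the classical statement and to \cite[Corollary 7.3.2]{BMT14:stability_threefolds} for the tilt-stable version. Your argument is, in substance, the proof given in those references (see also \cite[Appendix B]{BMS16:abelian_threefolds} and \cite[Section 6]{MS17:lectures_notes}): the observation that $\Delta$ is unchanged by the twist $\ch\mapsto\ch^{\beta}$, the base case at the large volume limit where the classical Bogomolov inequality (via restriction to a general surface section) and the trivial rank-zero case apply, and the propagation across walls via convexity of $\{\Delta\geq 0\}$ on the $2$-plane $P$ spanned by the destabilizing classes together with $\ker Z_{\alpha,\beta}$, on which $\Delta$ has signature $(1,1)$ because its value on a generator of $\ker Z_{\alpha,\beta}$ is $-\alpha^2<0$. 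All of this is sound. Two small corrections: the line separating the two wedges of $\{\Delta\geq 0\}\cap P$ is $\ker Z_{\alpha,\beta}\cap P$, not $\{\ch_1^{\beta}=0\}\cap P$ as such (these agree unless $P$ lies entirely in the hyperplane $\ch_1^{\beta}=0$, a degenerate case of infinite slope that should be disposed of separately), and what places $v(A)$ and $v(B)$ in the same wedge is that $Z_{\alpha,\beta}$ sends both to the same open ray. Also, the large-volume description is slightly more involved than ``a slope-semistable sheaf up to shift'' (shifted torsion-free parts can be extended by sheaves supported in dimension zero), but this does not affect $\Delta$.

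The one point that is genuinely under-specified is the termination of the induction. Appealing to ``local finiteness of walls together with boundedness'' is close to circular: for tilt stability both of these are usually deduced from the very inequality $\Delta\geq 0$ (one bounds $\ch_0$ of destabilizing subobjects using $0\leq\Delta(A)\leq\Delta(E)$). The standard way to close the argument is to fix a rational $\beta$ and induct on the discrete quantity $H^2\ch_1^{\beta}(E)\in\tfrac{1}{q}\Z_{\geq 0}$: at any wall every Jordan--H\"older factor has strictly smaller positive $H^2\ch_1^{\beta}$, the base case $H^2\ch_1^{\beta}=0$ consists of objects of infinite slope handled directly, and the case of arbitrary $\beta$ follows by density. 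With that replacement your proof is the one in the literature.
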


To get a handling of the third Chern character we require the following generalized Bogomolov inequality. A close version was proved in \cite{Mac14:conjecture_p3} and shown to be equivalent to the stated quadratic inequality in \cite{BMS16:abelian_threefolds}.

\begin{thm}
\label{thm:generalized_bogomolov}
If $E \in \Coh^{\beta}\P^3$ is $\nu_{\alpha, \beta}$-semistable, then
\[
Q_{\alpha, \beta}(E) := \alpha^2 \Delta_H(E) + 4 (\ch_2^{\beta}(E))^2 - 6 \ch_1^{\beta}(E) \ch_3^{\beta}(E) \geq 0. 
\]
\end{thm}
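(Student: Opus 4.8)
The plan is not to attack the quadratic form $Q_{\alpha,\beta}$ head-on, but to reduce it to the much more rigid pointwise bound that is actually established for $\P^3$ in \cite{Mac14:conjecture_p3}, via the reformulation of \cite{BMS16:abelian_threefolds}. The first move is this reduction. For a $\nu_{\alpha,\beta}$-semistable object $E$, look at the point(s) of its semistability region lying on the curve $\nu_{\alpha,\beta}(E)=0$, i.e.\ where $\ch_2^{\beta}(E)=\tfrac{\alpha^2}{2}\ch_0^{\beta}(E)$. Since $\Delta_H$ is invariant under twisting by $e^{-\beta H}$, so that $\Delta_H(E)=(\ch_1^{\beta}(E))^2-2\ch_0^{\beta}(E)\ch_2^{\beta}(E)$, substituting this vanishing relation collapses the form to
\[
Q_{\alpha,\beta}(E)=\alpha^2(\ch_1^{\beta}(E))^2-6\,\ch_1^{\beta}(E)\,\ch_3^{\beta}(E).
\]
The content of \cite{BMS16:abelian_threefolds} is that $Q_{\alpha,\beta}(E)\geq 0$ holds throughout the semistability region of $E$ as soon as it holds at this vanishing point. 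Hence, since $\ch_1^{\beta}(E)>0$ for a semistable object of the tilted heart with finite slope, the entire theorem reduces to the single inequality $\ch_3^{\beta}(E)\leq\tfrac{\alpha^2}{6}\ch_1^{\beta}(E)$ for every tilt-semistable $E$ at every $(\alpha,\beta)$ on its own vanishing locus.

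At such a point the two quantities involved are components of the Chern character, hence additive in short exact sequences. I would therefore pass to a Jordan--H\"older filtration of $E$ into $\nu_{\alpha,\beta}$-stable factors of the same slope: as $\ch_1^{\beta}$ and $\ch_3^{\beta}$ add over the factors, it suffices to prove the bound for tilt-\emph{stable} $E$. This is a real simplification over trying to prove superadditivity of $Q_{\alpha,\beta}$ directly.

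The remaining, essential step is the bound $\ch_3^{\beta}(E)\leq\tfrac{\alpha^2}{6}\ch_1^{\beta}(E)$ for tilt-stable $E$ on its vanishing locus, and this is where the special geometry of $\P^3$ must enter, as it cannot follow from formal manipulation of $Q_{\alpha,\beta}$ alone. I would run an induction on the discriminant $\Delta_H(E)$, a nonnegative real number by the classical Bogomolov inequality (Theorem~\ref{thm:bogomolov}) which strictly drops at walls: along any wall the destabilizing factors $A,B$ share the tilt-slope of $E$, and the Hodge index theorem forces $\Delta_H(A)+\Delta_H(B)\leq\Delta_H(E)$. If $E$ is destabilized, the inductive hypothesis on $A$ and $B$, combined with the behavior of the form under extensions at the wall, feeds back into the bound for $E$. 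The base of the induction consists of objects never destabilized, hence stable in an unbounded chamber where tilt-stability is governed by slope stability of sheaves, together with the discriminant-zero objects, which on $\P^3$ are exactly shifts of twists of $\OO$. For these one exploits the strong exceptional collection $\OO,\OO(1),\OO(2),\OO(3)$: tilt-stability forces the vanishing of $\Hom(\OO(k)[j],E)$ for an appropriate range of $k$ and $j$, and feeding these vanishings into Hirzebruch--Riemann--Roch yields precisely the upper bound $\tfrac{\alpha^2}{6}\ch_1^{\beta}(E)$ on $\ch_3^{\beta}(E)$. I expect this final step to be the main obstacle: controlling every destabilizing configuration, transporting the inequality between a wall and the vanishing point, and pinning down the base cases all require the $\P^3$-specific vanishing input, and it is exactly here that the sharp statement develops the messy error term $\varepsilon$ that we are choosing to avoid.
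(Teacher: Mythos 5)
First, a point of reference: the paper does not actually prove this theorem. It is imported wholesale, with \cite{Mac14:conjecture_p3} credited for the original (``classical'') form of the inequality and \cite{BMS16:abelian_threefolds} for the equivalence with the quadratic form $Q_{\alpha,\beta}$. Measured against that, your first two reductions are correct and are precisely the \cite{BMS16:abelian_threefolds} reformulation: on the locus $\nu_{\alpha,\beta}(E)=0$ the form does collapse to $\alpha^2(\ch_1^{\beta}(E))^2-6\ch_1^{\beta}(E)\ch_3^{\beta}(E)$; since $Q_{\alpha,\beta}(E)=0$ is itself a numerical wall, the sign of $Q_{\alpha,\beta}(E)$ is constant on each region between actual walls and each such region meets $\nu_{\alpha,\beta}(E)=0$, so checking there suffices; and additivity of $\ch^{\beta}$ over Jordan--H\"older factors (all of the same finite slope, hence all sitting on the same vanishing locus) reduces to stable objects. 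Up to this point you are on the same road as the cited references.

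The gap is that the remaining inequality $\ch_3^{\beta}(E)\leq \tfrac{\alpha^2}{6}\ch_1^{\beta}(E)$ for tilt-stable $E$ with $\nu_{\alpha,\beta}(E)=0$ is not a ``final step'' to be sketched: it is the entire theorem of \cite{Mac14:conjecture_p3}, and your outline of it would not compile into a proof. You do not identify which vanishings tilt-stability actually produces (the working input is $\Hom(\OO(k),E)=0$ and $\Hom(E,\OO(k')[1])=0$ for ranges of $k,k'$ read off from $\beta$ by comparing slopes with the everywhere-tilt-stable objects $\OO(k)$, $\OO(k)[1]$, combined with Serre duality), nor how Riemann--Roch turns finitely many one-sided Euler-characteristic bounds into the stated bound on $\ch_3^{\beta}$ --- that computation is where all the difficulty and all the $\P^3$-specific content lives. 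The induction on $\Delta_H$ is likewise not set up: to cross a wall you must show that $Q\geq 0$ for the destabilizing factors, evaluated where their central charges are aligned, forces $Q_{\alpha,\beta}(E)\geq 0$, which requires the signature of $Q$ restricted to that $2$-plane of classes and is only gestured at as ``the behavior of the form under extensions.'' A smaller inaccuracy: tilt-stable objects with $\Delta_H=0$ are not only shifts of twists of $\OO$; classes of the form $(0,0,0,e)$ also qualify (harmless here since they have infinite slope, but the classification as stated is false). In short, this is an accurate roadmap of the two-stage argument the paper cites, but the decisive stage is exactly the content of the cited paper and is not supplied.
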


By design the equation $Q_{\alpha, \beta}(E) = 0$ is equivalent to $\nu_{\alpha, \beta}(E) = \nu_{\alpha, \beta}(\ch_1(E), 2\ch_2(E), 3\ch_3(E))$ and thus constitutes a numerical wall for $E$.

Walls behave very similarly to the case of K3 surfaces with a few key differences. Simply replace the Mukai vector with the Chern character and the Mukai form with $\Delta$ in the statement of Proposition~\ref{prop:structure}. Due to Theorem~\ref{thm:bogomolov} the quadratic form $\Delta$ is never negative for semistable objects, and therefore, the case of negative quadratic form in Proposition~\ref{prop:structure} can be ignored. Moreover, due to \cite[Corollary 3.11]{BMS16:abelian_threefolds} line bundles are stable for all $\alpha > 0$, $\beta \in \R$.

%Moreover, $\Delta$ is actually decreasing for stable factors along walls.

% \begin{prop}[{\color{red} Reference}]
% \label{prop:discriminant_decreases}
% Assume there is a short exact sequence $0 \to F \to E \to G \to 0$ in $\Coh^{\beta}(\P^3)$ of $\nu_{\alpha, \beta}$-semistable objects for $(\alpha, \beta) \in W(F, G)$. Then
% \[
% 0 \leq \Delta(F) + \Delta(G) \leq \Delta(E).
% \]
% Moreover, equality can only occur if $W(F, G)$ is the vertical wall and either $(\ch_0(F), \ch_1(F), \ch_2(F)) = 0$ or $(\ch_0(G), \ch_1(G), \ch_2(G)) = 0$. In particular, line bundles are stable for all $\alpha > 0$, $\beta \in \R$.
% \end{prop}

In order to study wall-crossing in tilt stability we will frequently use the following proposition to bound the rank of potentially destabilizing subobjects (\cite[Proposition 8.3]{CH16:ample_cone_plane} and \cite[Lemma 2.4]{MS18:space_curves}).

\begin{prop}
\label{prop:max_rank}
Let $E \in \Coh^{\beta}\P^3$ with $\ch_0(E) > 0$ be $\nu_{\alpha, \beta}$-semistable along some of its numerical walls $W$ with radius $\rho_W$. If $F$ is a $\nu_{\alpha, \beta}$-semistable subobject or quotient of $E$ with $\ch_0(F) > \ch_0(E)$, then
\[
\rho_W^2 \leq \frac{\Delta(E)}{4\ch_0(F)(\ch_0(F) - \ch_0(E))}.
\]
\end{prop}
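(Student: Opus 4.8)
The plan is to evaluate everything at the apex of the wall $W$, where the computation collapses. First I would reduce to the case that $F$ is a subobject; the quotient case is identical after exchanging the roles of sub and quotient (replacing $F$ by the kernel of $E \onto F$). So I take a short exact sequence $0 \to F \to E \to Q \to 0$ in $\Coh^{\beta}\P^3$ along $W$, with all three objects $\nu_{\alpha, \beta}$-semistable of the same slope on $W$. Since $\ch_0(F) > \ch_0(E) > 0$, the quotient has negative rank $\ch_0(Q) = \ch_0(E) - \ch_0(F) < 0$.

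The key geometric input is the structure of walls (the $\P^3$-analogue of Proposition~\ref{prop:structure}): the semicircular wall $W$ for $E$ has its apex on the curve $\nu_{\alpha, \beta}(E) = 0$. I would therefore work at the apex, where $\alpha = \rho_W$ and $\nu_{\alpha, \beta}(E) = 0$. Because $F$ and $Q$ share the slope of $E$ along $W$, at the apex we also have $\nu_{\alpha, \beta}(F) = \nu_{\alpha, \beta}(Q) = 0$, which means precisely that $\ch_2^{\beta}(G) = \tfrac{\rho_W^2}{2}\ch_0(G)$ for each $G \in \{E, F, Q\}$.

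With this, the Bogomolov inequality (Theorem~\ref{thm:bogomolov}) simplifies. Substituting $\ch_2^{\beta}(F) = \tfrac{\rho_W^2}{2}\ch_0(F)$ into $\Delta(F) = \ch_1^{\beta}(F)^2 - 2\ch_0(F)\ch_2^{\beta}(F) \geq 0$ gives $\ch_1^{\beta}(F)^2 \geq \rho_W^2\,\ch_0(F)^2$. Since $F \in \Coh^{\beta}\P^3$ forces $\ch_1^{\beta}(F) \geq 0$, I conclude $\ch_1^{\beta}(F) \geq \rho_W\,\ch_0(F)$, and the same argument applied to $Q$ yields $\ch_1^{\beta}(Q) \geq \rho_W\,|\ch_0(Q)| = \rho_W(\ch_0(F) - \ch_0(E))$. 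Adding these and using additivity $\ch_1^{\beta}(E) = \ch_1^{\beta}(F) + \ch_1^{\beta}(Q)$, I obtain $\ch_1^{\beta}(E) \geq \rho_W(2\ch_0(F) - \ch_0(E))$. Finally I feed this into $\Delta(E) = \ch_1^{\beta}(E)^2 - \rho_W^2\,\ch_0(E)^2$ (again using $\nu_{\alpha,\beta}(E) = 0$ at the apex) and apply the identity $(2\ch_0(F) - \ch_0(E))^2 - \ch_0(E)^2 = 4\ch_0(F)(\ch_0(F) - \ch_0(E))$ to get $\Delta(E) \geq 4\ch_0(F)(\ch_0(F) - \ch_0(E))\,\rho_W^2$, which is the claim.

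The main obstacle, really the only subtle point, is justifying the two structural facts that make the apex evaluation legitimate: that the apex of $W$ lies on $\nu_{\alpha, \beta}(E) = 0$, and that $F$ and $Q$ are genuine objects of $\Coh^{\beta}\P^3$ at the apex, so that the sign condition $\ch_1^{\beta} \geq 0$ is available. The first is the structure theorem for walls; the second is exactly what rules out spurious numerical solutions, since a purely numerical class with the right invariants but $\ch_1^{\beta} < 0$ on $W$ would violate the bound. It is thus essential that $F$ genuinely sits inside $E$ in the tilted heart all along the wall.
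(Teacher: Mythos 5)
Your proof is correct and coincides with the standard argument from the sources the paper cites for this statement ([CH16, Proposition 8.3] and [MS18, Lemma 2.4]); the paper itself gives no proof. Evaluating at the apex of $W$, where $\nu_{\alpha,\beta}$ vanishes on $E$, $F$, and the cokernel, and combining the Bogomolov inequality for sub and quotient with $\ch_1^{\beta}\geq 0$ on the heart and additivity of $\ch_1^{\beta}$ is exactly the intended route, and your flagged subtle point (persistence of the destabilizing sequence in the heart along the whole wall, so that the apex evaluation is legitimate) is indeed the standard structure theorem for tilt-stability walls.
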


\subsection{The proof}

Unfortunately, applying Theorem~\ref{thm:generalized_bogomolov} to $\II_C$ is not good enough to obtain a strong bound on the genus. However, it can give strong bounds for rank zero objects that will turn out useful.

\begin{prop}
\label{prop:rank_zero}
Let $E \in \Coh^{\beta}\P^3$ be $\nu_{\alpha, \beta}$-semistable for some $(\alpha, \beta)$ with $\ch(E) = (0, c, d, e)$. Then
\[
e \leq \frac{c^3}{24} + \frac{d^2}{2c}.
\]
\end{prop}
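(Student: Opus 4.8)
The plan is to feed the hypothesis directly into the generalized Bogomolov inequality (Theorem~\ref{thm:generalized_bogomolov}) and then to optimize the resulting inequality over the region where $E$ is semistable. Since $\ch(E)=(0,c,d,e)$, I would assume $c>0$ (the case $c=0$ is degenerate: $\ch_1^{\beta}(E)=\ch_1(E)=c$ must be $\ge 0$ for $E\in\Coh^{\beta}\P^3$, and the stated bound only makes sense for $c\neq 0$). First I would record the twisted invariants $\ch_0^{\beta}(E)=0$, $\ch_1^{\beta}(E)=c$, $\ch_2^{\beta}(E)=d-c\beta$, $\ch_3^{\beta}(E)=e-d\beta+\tfrac{c}{2}\beta^2$, and $\Delta(E)=c^2$. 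Substituting into $Q_{\alpha,\beta}(E)\ge 0$ and completing the square in $\beta$, one is left with
\[
6ce \;\le\; c^2\alpha^2 + c^2\Big(\beta-\tfrac{d}{c}\Big)^2 + 3d^2,
\]
an inequality valid at every point $(\alpha,\beta)$ at which $E$ is $\nu_{\alpha,\beta}$-semistable.

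The key observation is that the right-hand side is an increasing function of the Euclidean distance from $(\alpha,\beta)$ to the point $(\tfrac{d}{c},0)$, and that — by the $\P^3$ analogue of the structure theorem for walls (Proposition~\ref{prop:structure}), applied to a class with $\ch_0=0$ and $\ch_1=c\neq 0$ — this point is exactly the common center of all numerical walls for $E$, which are concentric semicircles. On the semicircle of radius $\tfrac{c}{2}$ about $(\tfrac{d}{c},0)$ the right-hand side equals $\tfrac{c^4}{4}+3d^2$, and dividing by $6c$ gives precisely $e\le\tfrac{c^3}{24}+\tfrac{d^2}{2c}$. Thus the whole proposition reduces to exhibiting a single point with $\alpha^2+(\beta-\tfrac{d}{c})^2\le\tfrac{c^2}{4}$ at which $E$ is still semistable.

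The hard part — and the only genuinely geometric input — is to show that $E$ remains semistable somewhere inside or on this semicircle of radius $\tfrac{c}{2}$. I would prove this by bounding the radius of every actual wall for $E$ by $\tfrac{c}{2}$. Given a wall with destabilizing sequence $0\to F\to E\to G\to 0$ and $\ch(F)=(r',c',d',\ast)$ where $r'>0$ (replacing $F$ by $G$ if necessary), a direct computation of $\nu_{\alpha,\beta}(F)=\nu_{\alpha,\beta}(E)$ shows the wall is the semicircle centered at $(\tfrac{d}{c},0)$ of radius $\rho$ with $\rho^2=\tfrac{d^2}{c^2}+\tfrac{2(cd'-c'd)}{cr'}$; feeding in the classical Bogomolov inequalities $\Delta(F)\ge 0$ and $\Delta(G)\ge 0$ (Theorem~\ref{thm:bogomolov}) together with the constraint $0\le\ch_1^{\beta}(F)\le c$ coming from $F$ being a subobject in the tilted heart $\Coh^{\beta}\P^3$ should force $\rho\le\tfrac{c}{2}$. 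Granting this, the conclusion is immediate: if the semistable point from the hypothesis already lies within radius $\tfrac{c}{2}$ we evaluate the displayed inequality there; otherwise, since no walls occur at radius $>\tfrac{c}{2}$, the object $E$ stays semistable as $(\alpha,\beta)$ moves radially inward, and by closedness of semistability it is semistable at radius exactly $\tfrac{c}{2}$, where the inequality yields the claim. I expect this wall-radius estimate to be the main obstacle, since it is where the tilted-heart membership conditions must be combined carefully with the two Bogomolov inequalities; the sharpness of the bound (equality for $\OO_S$ and its twists $\OO_S(mH)$, destabilized by $\OO_X$ exactly at radius $\tfrac{c}{2}$) is a useful consistency check.
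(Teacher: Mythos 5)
Your proposal is correct and follows essentially the same route as the paper: evaluate $Q_{\alpha,\beta}(E)\geq 0$ on the disc of radius $\tfrac{c}{2}$ centred at $(\beta,\alpha)=(\tfrac{d}{c},0)$, and reduce to showing that no actual wall for $E$ has radius exceeding $\tfrac{c}{2}$. The step you leave as ``should force'' --- bounding the radius when the destabilizing subobject has rank $r'>0$ --- is exactly Proposition~\ref{prop:max_rank} applied with $\ch_0(E)=0$ and $\ch_0(F)=r'\geq 1$, which gives $\rho^2\leq \Delta(E)/(4r'^2)=c^2/(4r'^2)\leq c^2/4$; citing it is cleaner than re-deriving it, although your plan via the constraint $0\leq\ch_1^{\beta}(F)\leq c$ along the wall (whose $\beta$-interval has length $2\rho$, while that constraint cuts out an interval of length $c/r'$) does work. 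The one genuine omission is the case $\ch_0(F)=0$, which ``replacing $F$ by $G$'' does not cover since then $\ch_0(G)=0$ as well; here one checks, as the paper does, that $\nu_{\alpha,\beta}(F)=\nu_{\alpha,\beta}(E)$ holds either for all $(\alpha,\beta)$ or for none, so such subobjects never produce a wall.
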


\begin{proof}
If $(\alpha, \beta)$ satisfy $\alpha^2 + (\beta - \tfrac{d}{c})^2 \leq \tfrac{c^2}{4}$, then $Q_{\alpha, \beta}(E) \geq 0$ implies the statement. Therefore, we may assume $\alpha^2 + (\beta - \tfrac{d}{c})^2 > \tfrac{c^2}{4}$. If we can show that there is no wall for $E$ with radius $> \tfrac{c}{2}$, then we could vary $\alpha, \beta$ to reduce to the previous case.

Assume $0 \to F \to E \to G \to 0$ is a short exact sequence inducing a wall $W$ with $\rho_W > \tfrac{c}{2}$. If $F$ is an object with $\ch_0(F) = 0$, then a straightforward computation shows that $\nu_{\alpha, \beta}(F) = \nu_{\alpha, \beta}(E)$ holds independently of $(\alpha, \beta)$. Therefore, such objects $F$ destabilize $E$ everywhere or nowhere and thus, cannot induce a wall. 

Assume that $\ch_0(F) > 0$. Then Proposition~\ref{prop:max_rank} implies
\[
\frac{c^2}{4} < \rho_W^2 \leq \frac{\Delta(E)}{4\ch_0(F)^2} = \frac{c^2}{4 \ch_0(F)^2} \leq \frac{c^2}{4},
\]
a contradiction. If $\ch_0(F) < 0$, the same calculation with $G$ instead of $F$ leads to a contradiction.
\end{proof}

The fact that $C$ is smooth allows to reduce the possible walls for $\II_C$ as follows.

\begin{lem}
\label{lem:destabilized_by_line_bundle}
Let $F \into \II_C$ be a rank one subobject in $\Coh^{\beta}\P^3$ destabilizing $\II_C$ along a semicircular actual wall $W(F, \II_C)$. Then $F$ is a line bundle.
\end{lem}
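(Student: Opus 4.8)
The plan is to reduce $F$ to an honest coherent sheaf, pass to its reflexive hull, and then play the stability of line bundles against the semistability of $\II_C$ along the wall. First I would show that $F$ is a rank one torsion-free subsheaf of $\II_C$. Since $F \into \II_C$ is a monomorphism in $\Coh^{\beta}\P^3$ and $\II_C$ is a sheaf, the long exact sequence of cohomology sheaves attached to the triangle $F \to \II_C \to Q \to F[1]$ forces $\HH^{-1}(F) = 0$, so $F \in \Coh\P^3$ and $F \in \TT^{\beta}$. A rank count in the resulting four-term sequence $0 \to \HH^{-1}(Q) \to F \to \II_C \to \HH^0(Q) \to 0$ then shows the sheaf map $F \to \II_C$ is injective: its kernel $\HH^{-1}(Q)$ lies in $\FF^{\beta}$, and if the image were torsion it would vanish (as $\II_C$ is torsion-free), forcing $F \cong \HH^{-1}(Q) \in \TT^{\beta}\cap\FF^{\beta} = 0$. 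Thus $F$ is a rank one torsion-free subsheaf of $\II_C \subset \OO_{\P^3}$.

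Next I would pass to the reflexive hull $L := F^{\vee\vee}$. On the smooth threefold $\P^3$ a rank one reflexive sheaf is a line bundle, and since $F \subseteq \OO_{\P^3}$ we get $L = \OO_{\P^3}(-aH)$ for some $a \geq 0$, together with a sequence $0 \to F \to L \to T \to 0$ with $\supp(T)$ of codimension $\geq 2$. As $T$ is torsion it lies in $\TT^{\beta}$, so this is also exact in $\Coh^{\beta}\P^3$ and exhibits $F$ as a subobject of $L$. Now $F$ is a line bundle exactly when $T = 0$, so it suffices to rule out $T \neq 0$, and I would split on $\dim\supp(T)$. If $\supp(T)$ has dimension $0$, then $F$ and $L$ share $\ch_0,\ch_1,\ch_2$, so $\nu_{\alpha,\beta}(F) = \nu_{\alpha,\beta}(L)$ for all $(\alpha,\beta)$; but line bundles are $\nu_{\alpha,\beta}$-stable for all $\alpha>0$, $\beta\in\R$, and a stable object has no proper subobject of the same slope, contradicting $F \subsetneq L$. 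Hence $\supp(T)$ has dimension $1$.

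In the remaining case I would produce an embedding $L \into \II_C$ and contradict the semistability of $\II_C$ along $W$. The composite $F \into \II_C \into \OO_{\P^3}$ is an element of $\Hom(F,\OO_{\P^3}) = H^0(F^{\vee}) = H^0(L^{\vee}) = H^0(\OO_{\P^3}(aH))$, i.e. a nonzero degree-$a$ form $s$ cutting out a surface $S = V(s)$, extending the map to $s\colon L = \OO_{\P^3}(-aH)\to\OO_{\P^3}$ with image $\II_S \cong \OO_{\P^3}(-aH)$. The key geometric point is that $C \subseteq S$, so that $\II_S \subseteq \II_C$ and $s$ realizes $L$ as a subsheaf of $\II_C$; this is where smoothness of $C$ enters, ensuring $\II_C$ has no embedded points and that vanishing of $s$ detected away from $\supp(T)$ propagates to scheme-theoretic vanishing on all of $C$. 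Granting this, $L \into \II_C$ is a subobject in $\Coh^{\beta}\P^3$, and stability of $L$ together with $F \subsetneq L$ gives $\nu_{\alpha,\beta}(L) > \nu_{\alpha,\beta}(F)$ on $W$. Since $\nu_{\alpha,\beta}(F) = \nu_{\alpha,\beta}(\II_C)$ along $W = W(F,\II_C)$, this yields $\nu_{\alpha,\beta}(L) > \nu_{\alpha,\beta}(\II_C)$, contradicting semistability of $\II_C$ on the wall. Therefore $T = 0$ and $F$ is a line bundle.

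The main obstacle is precisely the geometric step $C \subseteq S$ in the one-dimensional case: a priori $\supp(T)$ could meet or even contain components of $C$, so the scheme-theoretic vanishing of $s$ on $C$ is not automatic from its behaviour on the open locus where $F = L$. This is exactly where the smoothness hypothesis on $C$ must be used essentially, to control the embedded structure of $\II_C$ and the residual scheme of $C$ in $S$, and it is also the step most sensitive to the precise meaning of \emph{destabilizing subobject along an actual wall}.
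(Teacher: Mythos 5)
Your reduction of $F$ to a rank one torsion-free subsheaf of $\II_C$ is correct and matches the paper, as does the structural step $F \subseteq L := F^{\vee\vee} = \OO(-a)$ with cokernel $T$ supported in codimension at least two. The genuine gap is the one you flag yourself and then leave open: when $\dim\supp(T) = 1$ you must exclude the possibility that $C$ is contained in $\supp(T)$ rather than in the surface $S = V(s)$, and without this the embedding $L \into \II_C$ --- and hence your contradiction --- is simply not available. Contrary to your closing speculation, this case is not handled by smoothness of $C$ or by controlling embedded points; the paper closes it with a numerical argument. Writing $F = \II_{Y}(-a)$ with $Y = \supp(T)$ of codimension at least two, one computes $\Delta(F) = 2\deg Y$ while $\Delta(\II_C) = 2d$, so $C \subseteq Y$ would force $\Delta(F) \geq \Delta(\II_C)$; this contradicts the fact that the discriminant must decrease across a destabilizing short exact sequence of tilt-semistable objects of the same slope (see \cite[Corollary 3.10]{BMS16:abelian_threefolds}). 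Only integrality of $C$ is used. Since you explicitly supply no argument for this case, the proof is incomplete at its crucial point.

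A secondary error: your exclusion of the case $\dim\supp(T) = 0$ via ``a stable object has no proper subobject of the same slope'' is false in tilt stability. For $Z$ zero-dimensional, $\II_Z(-a) \subset \OO(-a)$ has the same tilt slope as $\OO(-a)$ for every $(\alpha,\beta)$, and this is perfectly compatible with the stability of $\OO(-a)$, because the quotient $\OO_Z$ has $\ch_1^{\beta} = \ch_2^{\beta} = 0$ and hence degenerate (infinite) slope --- exactly the degeneracy that prevents $\nu_{\alpha,\beta}$ from being a genuine stability function on threefolds. So such an $F$ cannot be ruled out this way; it really is a same-slope subobject of $\II_C$ whenever $\OO(-a)$ is. What saves the statement, and is all that is needed downstream, is that such an $F$ and $\OO(-a)$ have equal $\ch_{\leq 2}$ and therefore define the same numerical wall, so the destabilizer may be replaced by the line bundle $\OO(-a)$, which embeds into $\II_C$ because the degree-$a$ form $s$ vanishes on the integral curve $C$. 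This weaker conclusion --- that the wall is induced by a line bundle --- is precisely how the paper's own proof ends and how the lemma is used in the proof of the theorem.
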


\begin{proof}
Let $G$ be the quotient of $F \into \II_C$ in $\Coh^{\beta}\P^3$. Taking the long exact sequence in cohomology, we get a long exact sequence of sheaves
\[
0 \to \HH^{-1}(F) \to 0 \to \HH^{-1}(G) \to \HH^0(F) \to \II_C \to \HH^0(G) \to 0.
\]
In particular, $\HH^{-1}(F) = 0$ and $F$ is a sheaf. By definition $\HH^{-1}(G) \in \FF^{\beta}$ is torsion-free, thus $F$ is also torsion-free. Since the quotient of $\HH^{-1}(G) \into F$ embeds into $\II_C$ it must be either trivial or $\HH^{-1}(G) = 0$. However, if it were trivial, then the map $F \to \II_C$ is trivial, in contradiction to being an injection. Overall, we showed that $0 \to F \to \II_C \to G \to 0$ is also a short exact sequence in $\Coh \P^3$.

There is a subscheme $W \subset \P^3$ of codimension at least two and a positive integer $m > 0$ such that $F = \II_W(-m) \into \II_C$. Since $C$ is integral, it is contained in either $W$ or a surface of degree $m$.

Assume that $C$ is contained in $W$. We can compute that $\Delta(F)$ is twice the degree of $W$. Similarly, $\Delta(\II_C)$ is twice the degree of $C$. However, this implies the contradiction $\Delta(F) \geq \Delta(\II_C)$. Indeed, the discriminant must decrease in this situation, for semistable subobjects of the same slope (see \cite[Corollary 3.10]{BMS16:abelian_threefolds}).

Assume that $C$ is contained in a degree $m$ surface. Then there is a morphism $\OO(-m) \into \II_C$. A straightforward computation shows that $W(\OO(-m), \II_C)$ is larger than or equal to $W(F, \II_C)$ and indeed, $\II_C$ is destabilized by a line bundle.
\end{proof}

It holds more generally that any destabilizing subobject of $\II_C$ for integral $C$ is reflexive (see \cite[Lemma 3.12]{MS18:space_curves}), but we do not require this statement here.

\begin{lem}
\label{lem:decreasing}
As long as $d \geq k^2$, the function
\[
E(d, k) = \frac{d^2}{2k} + \frac{dk}{2}
\]
is decreasing in $k$.
\end{lem}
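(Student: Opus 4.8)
The plan is to regard $k$ as a continuous positive real variable with $d$ fixed and positive, and to show that the partial derivative $\partial E/\partial k$ is non-positive precisely on the region cut out by the hypothesis $d \geq k^2$. Since a function that is (weakly) decreasing as a function of a real variable is in particular decreasing along the integers, this suffices for whatever discrete comparison the application requires.

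First I would differentiate $E(d,k) = \frac{d^2}{2k} + \frac{dk}{2}$ with respect to $k$, obtaining
\[
\frac{\partial E}{\partial k} = -\frac{d^2}{2k^2} + \frac{d}{2} = \frac{d}{2k^2}\bigl(k^2 - d\bigr).
\]
Since $d > 0$ and $k > 0$, the prefactor $\frac{d}{2k^2}$ is strictly positive, so the sign of the derivative is governed entirely by the factor $k^2 - d$.

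Next I would simply invoke the hypothesis: the condition $d \geq k^2$ is exactly $k^2 - d \leq 0$, whence $\partial E/\partial k \leq 0$, with strict inequality whenever $d > k^2$. This shows that $E$ is non-increasing in $k$ on the relevant region, and strictly decreasing away from the boundary locus $d = k^2$, which is all that is needed.

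I expect no genuine obstacle here: the statement reduces to a single elementary derivative computation followed by a sign check. The only point worth flagging is the boundary case $d = k^2$, where the derivative vanishes, so that the monotonicity is weak rather than strict exactly on that locus; this is harmless since the intended use only needs that increasing $k$ does not increase the bound.
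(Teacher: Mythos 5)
Your computation is correct and is exactly the argument the paper intends: its proof consists of the single sentence that the claim ``follows from the derivative of $E(d,k)$ by $k$,'' and you have simply carried out that derivative and the sign check, obtaining $\partial E/\partial k = \frac{d}{2k^2}(k^2-d) \leq 0$ under the hypothesis $d \geq k^2$. Your remark about the boundary case $d = k^2$ giving only weak monotonicity is a fair observation and harmless for the application.
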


\begin{proof}
This follows from the derivative of $E(d, k)$ by $k$.
\end{proof}

\begin{thm}
Let $C \subset \P^3$ be a smooth curve of degree $d$ and genus $g$ that is not contained in a surface of degree $l < k$. If $d \geq k^2$, then
\[
\ch_3(\II_C) \leq \frac{d^2}{2k} + \frac{dk}{2}.
\]
\end{thm}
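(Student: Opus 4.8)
The plan is to track the ideal sheaf $\II_C$ through tilt stability and read off the bound from the generalized Bogomolov inequality. First I would record the numerical data: from $0 \to \II_C \to \OO_{\P^3} \to \OO_C \to 0$ one computes $\ch(\II_C) = (1, 0, -d, 2d + g - 1)$, so that $\Delta(\II_C) = \ch_1^2 - 2\ch_0\ch_2 = 2d$ and the quantity to be bounded is literally $\ch_3(\II_C) = 2d + g - 1$. Since $\II_C$ is a rank-one torsion-free sheaf it is slope-stable, hence $\nu_{\alpha,\beta}$-stable for $\alpha \gg 0$ at a fixed $\beta < 0$; the strategy is to decrease $\alpha$ and understand the largest wall $W$ for $\II_C$.

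The heart of the argument is classifying the destabilizer along $W$. Any destabilizing subobject $F$ is a sheaf (since $\HH^{-1}(\II_C) = 0$) of rank at least one (as $\II_C$ is torsion-free, a torsion subobject would vanish). If $F$ has rank one, Lemma~\ref{lem:destabilized_by_line_bundle} shows it is a line bundle $\OO(-m)$; if $F$ has rank at least two, Proposition~\ref{prop:max_rank} forces $\rho_W^2 \le \Delta(\II_C)/8 = d/4$. The hypothesis that $C$ lies on no surface of degree $l < k$ means $\Hom(\OO(-l), \II_C) = H^0(\II_C(l)) = 0$ for $l < k$, so any destabilizing line bundle $\OO(-m)$ has $m \ge k$. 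A direct computation shows that $W(\OO(-m), \II_C)$ is the semicircle of radius $\rho(m)^2 = m^2/4 + d^2/m^2 - d$, which exceeds $d/4$ exactly when $m < \sqrt d$.

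I would then split according to the size of the largest wall. If $C$ lies on a surface of degree $m \le \sqrt d$ (necessarily $m \ge k$), the smallest such $m$ gives the largest wall, and by the radius comparison it dominates every rank $\ge 2$ wall; along it $\II_C$ is strictly semistable with $\OO(-m)$ a stable factor of equal slope, so the quotient $G$ in $0 \to \OO(-m) \to \II_C \to G \to 0$ is tilt-semistable of rank zero with $\ch(G) = (0, m, -d - m^2/2, 2d + g - 1 + m^3/6)$. Feeding this into Proposition~\ref{prop:rank_zero} and watching the $m^3$-terms cancel yields exactly $2d + g - 1 \le d^2/(2m) + dm/2 = E(d, m)$; since $k \le m \le \sqrt d$ gives $d \ge m^2$, Lemma~\ref{lem:decreasing} upgrades this to $E(d, m) \le E(d, k)$, which is the claim. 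In the complementary regime, where $C$ lies on no surface of degree $\le \sqrt d$, every wall has radius at most $\sqrt d/2$, so $\II_C$ remains $\nu_{\alpha,\beta}$-semistable at the common apex $(\beta, \alpha) = (-\tfrac{3}{2}\sqrt d, \tfrac{1}{2}\sqrt d)$ of the radius-$\tfrac12\sqrt d$ walls; evaluating Theorem~\ref{thm:generalized_bogomolov} there and simplifying gives $2d + g - 1 \le d^{3/2} = E(d, \sqrt d) \le E(d, k)$, using that $E(d, \cdot)$ is minimized at $\sqrt d$ and $k \le \sqrt d$.

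The main obstacle is the wall analysis in the middle step: showing that the destabilizing line-bundle degree $m$ lands in the window $[k, \sqrt d]$ where Lemma~\ref{lem:decreasing} applies, and otherwise controlling the largest wall well enough to evaluate $Q_{\alpha,\beta}$ at a fixed semistable point. This is precisely where $d \ge k^2$ is used, since it guarantees $k \le \sqrt d$ so that the window is nonempty and $E(d,\sqrt d) \le E(d,k)$. The delicate points are verifying that $G$ is genuinely tilt-semistable via the Jordan--Hölder factorization along $W$ (not merely that the short exact sequence exists), and ruling out a larger wall slipping below the one produced by $\OO(-m)$; the combination of Proposition~\ref{prop:max_rank} with Lemma~\ref{lem:destabilized_by_line_bundle} is exactly what closes this gap.
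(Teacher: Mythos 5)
Your proof is correct, and it runs on exactly the same toolkit as the paper's: Proposition~\ref{prop:max_rank} to force a rank-one destabilizer, Lemma~\ref{lem:destabilized_by_line_bundle} to upgrade it to a line bundle $\OO(-m)$ with $m\geq k$, Proposition~\ref{prop:rank_zero} applied to the rank-zero quotient to get $\ch_3(\II_C)\leq E(d,m)$, and Lemma~\ref{lem:decreasing} to compare with $E(d,k)$. The difference is the logical organization. The paper argues by contradiction: assuming $\ch_3(\II_C)>E(d,k)$, it shows every actual wall must have radius squared at least $\tfrac{9e^2-8d^3}{4d^2}>\tfrac{d}{4}$ (since a smaller wall would sit inside the region $Q_{\alpha,\beta}(\II_C)<0$), extracts the line bundle $\OO(-h)$, and then disposes of the residual case $d<h^2$ by checking that $W(\OO(-h),\II_C)$ either lies to the right of $\beta=-h$, where $\OO(-h)$ is not a subobject, or inside the locus $Q_{\alpha,\beta}(\II_C)<0$. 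You argue directly, splitting on whether $C$ lies on a surface of degree at most $\sqrt d$; in the second case you evaluate $Q_{\alpha,\beta}(\II_C)\geq0$ at the apex $(\beta,\alpha)=(-\tfrac32\sqrt d,\tfrac12\sqrt d)$, which indeed gives $9d^2-9\sqrt d\,\ch_3(\II_C)\geq0$, i.e., $\ch_3(\II_C)\leq d^{3/2}=E(d,\sqrt d)\leq E(d,k)$. The two endgames buy the same thing; yours is geometrically more transparent, while the paper's avoids certifying semistability at a specific point. One small imprecision to fix: $\rho(m)^2=\tfrac{m^2}{4}+\tfrac{d^2}{m^2}-d$ exceeds $\tfrac{d}{4}$ iff $m^2<d$ \emph{or} $m^2>4d$, not only for $m<\sqrt d$. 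The second branch is harmless because for $m^2>2d$ the semicircle $W(\OO(-m),\II_C)$ lies in the region $\beta>-m$, where $\OO(-m)$ is no longer an object of $\Coh^{\beta}\P^3$ (this is exactly the paper's $d<h^2/2$ case), but you should say this explicitly when asserting in your second case that every actual wall has radius at most $\tfrac12\sqrt d$.
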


\begin{proof}
Let $e = \ch_3(\II_C)$. Then $\ch(\II_C) = (1,0,-d,e)$. Assume for a contradiction
\[
e > \frac{d^2}{2k} + \frac{dk}{2}.
\]
We can compute $Q_{\alpha, \beta}(\II_C) \leq 0$ if and only if
\[
\alpha^2 + \left(\beta + \frac{3e}{2d} \right)^2 \leq \frac{9e^2 - 8d^3}{4d^2}.
\]
The equation $Q_{\alpha, \beta}(\II_C) = 0$ is the equation of a numerical wall, and therefore, another numerical wall is contained in $Q_{\alpha, \beta}(\II_C) < 0$ if its radius is smaller than the square root of the right hand side. By Theorem~\ref{thm:generalized_bogomolov} we get that any radius $\rho$ of a semicircular wall for $\II_C$ satisfies
\[
\rho^2 \geq \frac{9e^2 - 8d^3}{4d^2} > \frac{9(k^2 - d)^2}{16k^2} + \frac{d}{4} \geq \frac{d}{4} = \frac{\Delta(\II_C)}{8}.
\]
By Proposition~\ref{prop:max_rank} we get that $\II_C$ is destabilized by a subobject of rank one. By Lemma~\ref{lem:destabilized_by_line_bundle} it has to be a line bundle, too. Therefore, we can find an integer $h > 0$ such that $\II_C$ is destabilized via an exact sequence
\[
0 \to \OO(-h) \to \II_C \to Q \to 0.
\]
Applying Proposition~\ref{prop:rank_zero} to $Q$ leads to
\[
e \leq \frac{d^2}{2h} + \frac{dh}{2} = E(d, h).
\]
By Lemma~\ref{lem:decreasing} this leads to a contradiction unless $d < h^2$. The wall $W(\II_C, \OO(-h))$ is given by
\[
\alpha^2 + \left(\beta + \frac{h}{2} + \frac{d}{h} \right)^2 = \frac{(2d - h^2)^2}{4h^2}.
\]
For $d < \frac{h^2}{2}$, we get that the wall is to the right of $\beta = -h$. In particular, $\OO(-h)[1] \in \Coh^{\beta}\P^3$ and $\OO(-h)$ cannot be a subobject. Therefore, we are left to deal with the situation $\tfrac{h^2}{2} \leq d < h^2$. We can compute
\[
\frac{(2d - h^2)^2}{4h^2} < \frac{d}{4} < \frac{9e^2 - 8d^3}{4d^2}.
\]
Hence the wall lies in the region where $Q_{\alpha, \beta}(\II_C) < 0$, a contradiction to Theorem~\ref{thm:generalized_bogomolov}.
\end{proof}

%%%%%%%%%%%%%%%%%%%%%%%%%%%%%%%%%%%%%%%%%%%%%
%%%%%%%%%%%%%%%%%%%%%%%%%%%%%%%%%%%%%%%%%%%%%
%%%%%%%%%%%%%%%%%%%%%%%%%%%%%%%%%%%%%%%%%%%%%

\newcommand{\etalchar}[1]{$^{#1}$}
\def\cprime{$'$} \def\cprime{$'$}

\end{document}